\newcommand{\NN}{\mathbb{N}}
\newcommand{\QQ}{\mathbb{Q}}
\newcommand{\ZZ}{\mathbb{Z}}
\newcommand{\ZZZ}{\widehat{\ZZ}}
\newcommand{\op}{\mathrm{op}}
\renewcommand{\det}{\mathrm{det}}
\let\M\relax
\DeclareMathOperator{\M}{M}
\DeclareMathOperator{\Hom}{Hom}
\DeclareMathOperator{\GL}{GL}
\newcommand{\HOM}{\mathcal{H}\! \mathit{om}}
\newcommand{\END}{\mathcal{E}\! \mathit{nd}}
\newcommand{\sets}{\mathsf{Sets}}
\newcommand{\setswith}[1]{\psh(#1)}
\newcommand{\sh}{\mathsf{Sh}}
\newcommand{\psh}{\mathsf{PSh}}
\newcommand{\pts}{\mathsf{Pts}}
\newcommand{\Flat}{\mathsf{Flat}}
\newcommand{\fp}{\mathfrak{p}}
\newcommand{\fq}{\mathfrak{q}}
\newcommand{\ns}{\mathrm{ns}}
\let\min\relax
\newcommand{\min}{\mathrm{min}}
\newtheorem{definition}{Definition}[subsection]
\newtheorem{proposition}[definition]{Proposition}
\newtheorem{theorem}[definition]{Theorem}
\newtheorem{corollary}[definition]{Corollary}
\newtheorem{lemma}[definition]{Lemma}
\newtheorem{example}[definition]{Example}
\newtheorem{remark}[definition]{Remark}
\tikzset{
b/.style={bend left=10},
bb/.style={bend left},
cl/.style={outer sep=-1pt},
}
\newcommand\rurl[1]{%
  \href{http://#1}{\nolinkurl{#1}}%
}
\newcommand\rsurl[1]{%
  \href{https://#1}{\nolinkurl{#1}}%
}
\title{Localization of monoids and topos theory}
\author{Jens Hemelaer}
\address{Department of Mathematics, University of Antwerp \\ 
 Middelheimlaan 1, B-2020 Antwerp (Belgium) \\ {\tt jens.hemelaer@uantwerpen.be}}
   \def\MR#1{}
\begin{document}
\begin{abstract}
Let $M$ be a monoid that is embeddable in a group. We consider the topos $\psh(M)$ of sets equipped with a right $M$-action, and we study the subtoposes that are of monoid type, i.e.\ the subtoposes that are again of the form $\psh(N)$ for $N$ a monoid. Our main result is that every subtopos of monoid type can be obtained by localization at a prime ideal of $M$. Conversely, we show that localization at a prime ideal produces a subtopos if and only if $M$ has the right Ore property with respect to the complement of the prime ideal. We demonstrate our calculations in some examples: free monoids, two monoids related to the Connes--Consani Arithmetic Site, and torus knot monoids.
\end{abstract}
\maketitle
\setcounter{tocdepth}{1}
\tableofcontents

\section{Introduction}
\label{sec:introduction}

For $M$ a monoid, the category of sets equipped with a right $M$-action is a Grothendieck topos. More precisely it is the presheaf topos $\psh(M)$, where $M$ is seen as a category with one object. Studying the category $\psh(M)$ provides insight into the structure of the monoid $M$ itself, see e.g.\ \cite{MAC}, and in particular, topos-theoretic properties of $\psh(M)$ can be translated to algebraic properties of $M$ \cite{hr1}.

In this paper, we will discuss the \emph{subtoposes of monoid type} of $\psh(M)$, by which we mean subtoposes of the form $\psh(N) \subseteq \psh(M)$, for $N$ another monoid. We focus on (not necessarily commutative) monoids $M$ that admit an embedding into a group $G$. Note that this is a strictly stronger condition than $M$ being cancellative \cite{bush}.

In \cite[\S 3.3]{pirashvili}, it is shown that for a commutative monoid $M$ and a multiplicative subset $S \subseteq M$, there is a geometric embedding
\begin{equation*}
\psh(S^{-1}M) \to \psh(M),
\end{equation*}
with $S^{-1}M$ the monoid obtained from $M$ by adding a formal inverse to each element in $S$. As pointed out in \cite[Lemma 1.1]{cortinas-haesemeyer-walker-weibel}, we can assume, without changing $S^{-1}M$, that $S=M-\fp$ for $\fp \subseteq M$ a prime ideal. We then write $M_\fp = S^{-1}M$, mimicking the notation from ring theory.

In this article, we will prove the following, for monoids $M$ that admit an embedding to a group. As we will discuss in Section \ref{sec:right-ore}, we still get a geometric embedding
\begin{equation*}
\psh(M_\fp) \subseteq \psh(M),
\end{equation*}
for a prime ideal $\fp \subseteq M$, but only under the condition that $M$ satisfies the right Ore property with respect to $M-\fp$. The main result of the paper is then the converse statement, that any subtopos of monoid type of $\psh(M)$ is of the form $\psh(M_\fp) \subseteq \psh(M)$ for $\fp$ a prime ideal, with $M$ right Ore with respect to $M-\fp$, see Theorem \ref{thm:complete-classification}.

The reason we restrict to monoids embeddable in a group, is that in this case we can use the results from \cite{topological-groupoid}. 
There it was shown that for $M \subseteq G$ a submonoid of a group $G$, there is an equivalence of categories
\begin{equation*}
\psh(M) \simeq \sh_G(X)
\end{equation*}
for some topological space $X$ on which the discrete group $G$ has a continuous left group action. The topological space $X$ has an explicit construction, and as remarked in \cite{topological-groupoid} this gives a method of explicitly computing the category of points of $\psh(M)$. In Section \ref{sec:points}, we will recall this construction, and then give explicit descriptions of the associated flat left $M$-sets and their endomorphism monoids.

In Section \ref{sec:examples}, we will discuss our framework in detail in four examples: the free monoid on $k$ generators, the monoid $\mathbb{N}^{\times}_+$ of nonzero natural numbers under multiplication, the monoid $\M_2^\ns(\ZZ)$ of $2\times 2$ integer matrices with nonzero determinant, and the torus knot monoid $\langle{a,b:a^k=b^l}\rangle$ for $k,l \geq 2$. 

For the second and third example, the calculation of the points is already discussed extensively in the literature, so these examples are only included to illustrate our framework, and to prepare for Section \ref{sec:return-to-examples}. The monoid $M =\mathbb{N}^{\times}_+$ underlies the Connes--Consani Arithmetic Site, and for a calculation of the topos-theoretic points we refer to \cite{connes-consani}, \cite{connes-consani-geometry-as} and \cite{llb-covers-general-version}. The case $M=\M_2^\ns(\ZZ)$ is a noncommutative variation on the Arithmetic Site, here we refer to \cite{arithmtop} \cite{thesis-jens}. For more examples related to the Arithmetic Site, see also \cite{sagnier} \cite{llb-three}. For finitely generated commutative monoids, the topos-theoretic points are classified up to isomorphism by the set of prime ideals \cite[Theorem 5.2.10]{pirashvili}, while for finite monoids, points can be described in terms of idempotents \cite[\S 3]{pirashvili-finite}.

In Section \ref{sec:subtoposes}, we will use the calculations from Section \ref{sec:points} to arrive at our main result, which is that the subtoposes of monoid type of $\psh(M)$ are all of the form $\psh(M_\fp) \subseteq \psh(M)$, for $\fp$ a prime ideal of $M$ such that $M$ is right Ore with respect to $M-\fp$. In the commutative case, the right Ore condition is automatically satisfied, so here we find that the subtoposes of monoid type correspond precisely to the prime ideals.

Finally, in Section \ref{sec:return-to-examples} we will revisit our earlier four examples, to demonstrate how our main result can be applied in practice.

\section{Prime ideals and the right Ore condition} \label{sec:right-ore}

\subsection{Prime ideals}

Recall that a left (resp.\ right) ideal of $M$ is a subset of $M$ which is closed under multiplication on the left by $M$ (resp.\ on the right). A two-sided ideal is a subset of $M$ that is both a left ideal and a right ideal.

\begin{definition}
A \emph{prime ideal} of $M$ is a two-sided ideal $\fp \subseteq M$ such that $1 \notin \fp$ and for all $a,b \in M$, if $ab \in \fp$ then either $a \in \fp$ or $b \in \fp$.
\end{definition}

So a two-sided ideal is a prime ideal if and only if its complement is a submonoid. The empty subset $\varnothing \subseteq M$ is always a prime ideal. If $\phi: M \to N$ is a monoid homomorphism, then whenever $\fp$ is a prime ideal of $N$, we have that $\phi^{-1}(\fp)$ is a prime ideal of $M$.

To compute the prime ideals of a monoid, the following result from \cite{pirashvili-spectrum} is very useful.

\begin{proposition}[{\cite[Lemma 2.1]{pirashvili-spectrum}}] \label{prop:prime-pirashvili}
Let $M$ be a monoid. Then there is a bijective correspondence between prime ideals of $M$ and monoid homomorphisms $M \to \{0,1\}$, where $\{0,1\}$ is equipped with the usual multiplication law.
The monoid homomorphism $\phi: M \to \{0,1\}$ corresponds to the prime ideal $\phi^{-1}(0) \subseteq M$.
\end{proposition}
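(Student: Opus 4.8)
The plan is to exhibit an explicit inverse to the stated map $\phi \mapsto \phi^{-1}(0)$ and to check that both composites are the identity. The guiding observation is that in the multiplicative monoid $\{0,1\}$ the identity element is $1$ and a product vanishes if and only if one of its factors does; this is precisely the combinatorics of the prime condition, so the whole statement should reduce to unwinding definitions.

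For the forward direction I would take a monoid homomorphism $\phi \colon M \to \{0,1\}$ and verify directly that $\fp = \phi^{-1}(0)$ is a prime ideal: $1 \notin \fp$ because $\phi(1) = 1$; the set $\fp$ is a two-sided ideal because $\phi(ma) = \phi(m)\phi(a)$ and $\phi(am) = \phi(a)\phi(m)$ both equal $0$ whenever $\phi(a) = 0$; and $\fp$ is prime because $\phi(ab) = \phi(a)\phi(b) = 0$ forces $\phi(a) = 0$ or $\phi(b) = 0$ by the vanishing dichotomy.

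For the converse I would send a prime ideal $\fp$ to the indicator function $\phi_\fp$ of its complement $S = M - \fp$, taking the value $1$ on $S$ and $0$ on $\fp$. The one fact to invoke here is the remark recorded just after the definition, namely that $S$ is a submonoid exactly when $\fp$ is prime: this immediately gives $\phi_\fp(1) = 1$ and multiplicativity on $S \times S$, while the case in which a factor lies in $\fp$ uses that $\fp$ is a two-sided ideal to conclude $ab \in \fp$. That the two constructions are mutually inverse is then immediate, since both $\phi$ and $\phi_\fp$ are determined by their zero sets. I do not anticipate any genuine obstacle; the only point requiring slight care is invoking the two-sided ideal property, rather than merely a one-sided one, in the mixed case of the homomorphism check.
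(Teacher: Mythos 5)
Your proof is correct and complete; the paper itself gives no proof of this proposition, instead citing Pirashvili's Lemma 2.1 and remarking only that the same argument works in the noncommutative case, and your argument is precisely that standard proof. You correctly isolate the two points that need any care at all: using two-sidedness of $\fp$ in the mixed case of the multiplicativity check, and the fact that the paper's conventions admit $\varnothing$ as a prime ideal (corresponding to the constant homomorphism $\phi \equiv 1$), so the correspondence really is a bijection.
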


In \cite{pirashvili-spectrum}, attention is restricted to commutative monoids, but the same proof works for noncommutative monoids.

In particular, a finitely generated monoid will have only finitely prime ideals, see \cite[Lemma 1.5]{cortinas-haesemeyer-walker-weibel}.

\subsection{Localization}

For a subset $S \subseteq M$, recall that we can construct a \emph{localization} $S^{-1}M$ of $M$ by formally inverting the elements of $S$. Starting from a presentation of $M$ by generators and relations, we add for each $s \in S$ a new generator $s^{-1}$ and relations $ss^{-1} = s^{-1}s = 1$. Now $S^{-1}M$ has the universal property that for any monoid homomorphism $\phi : M \to N$ such that $\phi(s)$ is invertible (on both sides) for all $s \in S$, there is a unique factorization of $\phi$ through the natural map $M \to S^{-1}M$. This universal property uniquely determines $S^{-1}M$; in particular, our construction in terms of generators and relations does not depend on the presentation of $M$ that we start with.

As a special case, the localization of $M$ at a prime ideal $\fp$ is defined as $M_\fp = S^{-1}M$ for $S = M-\fp$.

\begin{lemma} \label{lmm:S-becomes-invertible}
Let $M$ be a monoid and $\fp \subseteq M$ a prime ideal. Then there is an equality $M-\fp = M_\fp^\times \cap M$.
\end{lemma}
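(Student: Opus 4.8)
The plan is to prove the two inclusions $M - \fp \subseteq M_\fp^\times \cap M$ and $M_\fp^\times \cap M \subseteq M - \fp$ separately, where throughout I read $M_\fp^\times \cap M$ as the set of $m \in M$ whose image under the canonical map $\iota \colon M \to M_\fp$ lies in the unit group $M_\fp^\times$. (Under the standing hypothesis that $M$ embeds in a group $G$, the map $\iota$ is in fact injective, since the inclusion $M \hookrightarrow G$ inverts every element of $S = M - \fp$ and hence factors as $M \to M_\fp \to G$; so the intersection may also be read literally inside $M_\fp$.)

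The inclusion $M - \fp \subseteq M_\fp^\times \cap M$ is immediate from the construction of the localization: by definition $M_\fp = S^{-1}M$ with $S = M - \fp$, so every $s \in S$ has $\iota(s)$ invertible in $M_\fp$, giving $s \in M_\fp^\times \cap M$.

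For the reverse inclusion---which is the real content---I would invoke Proposition \ref{prop:prime-pirashvili}. It provides a monoid homomorphism $\phi \colon M \to \{0,1\}$ with $\phi^{-1}(0) = \fp$, equivalently $\phi^{-1}(1) = M - \fp = S$. Since $\phi(s) = 1$ is invertible in $\{0,1\}$ for every $s \in S$, the universal property of the localization yields a unique factorization $\bar\phi \colon M_\fp \to \{0,1\}$ with $\bar\phi \circ \iota = \phi$. Now suppose $m \in M_\fp^\times \cap M$. A monoid homomorphism sends units to units, and the only unit of $\{0,1\}$ is $1$, so $\phi(m) = \bar\phi(\iota(m)) = 1$, i.e.\ $m \in \phi^{-1}(1) = M - \fp$. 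This establishes $M_\fp^\times \cap M \subseteq M - \fp$ and completes the proof.

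I do not anticipate a genuine obstacle here: the content is really the single observation that invertibility is preserved by homomorphisms while $0 \in \{0,1\}$ is not invertible, so the detecting homomorphism $\phi$ obstructs any element of $\fp$ from becoming a unit after localization. The only point requiring a little care is the bookkeeping around the canonical map $\iota$, namely making precise what $M_\fp^\times \cap M$ means when $\iota$ is a priori not injective; the group-embedding hypothesis removes even this subtlety. A more hands-on alternative would be to argue directly with normal forms of elements of $S^{-1}M$, but routing the argument through $\{0,1\}$ avoids that entirely.
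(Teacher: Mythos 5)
Your proof is correct and follows essentially the same route as the paper: the easy inclusion from the definition of localization, and the reverse inclusion via the characteristic homomorphism $\phi \colon M \to \{0,1\}$ of Proposition \ref{prop:prime-pirashvili} extended to $M_\fp$ by the universal property, using that the only unit of $\{0,1\}$ is $1$. The extra remarks on the meaning of $M_\fp^\times \cap M$ are a reasonable clarification but do not change the argument.
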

\begin{proof}
If $m \in M-\fp$, then by definition $m$ becomes invertible in $M_\fp$. 

Conversely, suppose that $m$ becomes invertible in $M_\fp$.
Take the monoid homomorphism $\phi : M \to \{0,1\}$ corresponding to $\fp$, as in Proposition \ref{prop:prime-pirashvili}.
By the universal property of $M_\fp$, we can extend $\phi$ to a monoid homomorphism $\psi : M_\fp \to \{0,1\}$.
If $m$ becomes invertible in $M_\fp$, then $\phi(m)=\psi(m)=1$, so $m \in M-\fp$.
\end{proof}

\subsection{Tensor products and flatness}

Let $M$ be an arbitrary monoid. For a right $M$-set $X$ and a left $M$-set $A$, we define the tensor product as the quotient
\begin{equation*}
X \otimes_M A = \{ (x,a) : x \in X,~ a \in A \}/\!\sim
\end{equation*}
by an equivalence relation $\sim$ is generated by the relations
\begin{equation*}
(xm,a) \sim (x,ma)
\end{equation*}
for $m \in M$. The equivalence class of $(x,a)$ is written as $x \otimes a$. 

Tensoring with $A$ defines a colimit-preserving functor
\begin{gather*}
-\otimes_M A : \psh(M) \to \sets \\
X \mapsto X \otimes_M A.
\end{gather*}
If this functor preserves finite limits, then we say that $A$ is \emph{flat}.

The left $M$-set $A$ can equivalently be seen as a functor $M \to \sets$, where $M$ is seen as a one object category, and the functor is defined by sending the unique object to the set $A$ and each $m \in M$ to the function $A \to A,~ a \mapsto ma$. Under this correspondence, $A$ is flat as left $M$-set if and only if it is flat as a functor (see e.g.\ \cite[Proposition 1.8]{hr1}). Concretely, this means that $A$ is flat as a left $M$-set if and only if
\begin{enumerate}
\item[(F1)] $A \neq \varnothing$;
\item[(F2)] for all elements $a,b \in A$ there is an element $c \in A$ and elements $m,n \in M$ such that $a = mc$ and $b = nc$;
\item[(F3)] if $ma = na$ for $a \in A$ and $m,n \in A$, then there is an element $b \in A$ and an element $s \in M$ such that $a = sb$ and $ms = ns$. 
\end{enumerate}

\begin{example}
Let $\NN_+^\times$ be the monoid of nonzero natural numbers under multiplication, and let $\QQ_+^\times = \{q \in \QQ : q > 0 \}$. Then the left $\mathbb{N}^{\times}_+$-action on $\QQ_+^\times$, given by multiplication, is flat. The second criterion above is satisfied because we can bring $a$ and $b$ to a common denominator, and the third criterion is satisfied because of cancellativity. 
\end{example}

\begin{example} \label{eg:flatness-free-group}
Let $M$ be the free monoid in two variables $x$ and $y$, and let $G$ be the free group in the same variables $x$ and $y$. Then $G$ has a left $M$-action by multiplication, but this action is not flat. More precisely, in the second criterion above, take $a = x^{-1}$ and $b = y^{-1}$. If there is an element $c \in G$ and elements $m,n \in M$ with $x^{-1} = mc$, $y^{-1} = nc$, then $xm = c^{-1} = yn$. This gives a contradiction, because $xM \cap yM = \varnothing$. So $G$ is not flat as left $M$-set.
\end{example}

In the first example above, $\QQ_+^\times$ is the groupification of $\mathbb{N}^{\times}_+$, and in the second example $G$ is the groupification of $M$. Note that groupification coincides with localization at the empty prime ideal.
In the next subsection, we will discuss a criterion to decide whether $M_\fp$ is flat as a left $M$-set, for $\fp \subseteq M$ an arbitrary prime ideal.

\subsection{Right Ore condition}
The following concept is well-known in ring theory, see e.g.\ \cite[\S 9.1]{cohn}.

\begin{definition} \label{def:right-ore}
Let $M$ be a monoid and $S \subseteq M$ a multiplicative subset. Then we say that $M$ is \emph{right Ore with respect to $S$} if for any $m \in M$ and $s \in S$ there are $t \in S$ and $n \in M$ such that $mt = sn$.
\end{definition}

A monoid $M$ is called \emph{right Ore} if it is right Ore with respect to itself \cite[Chapter I, Definition 3.18]{MAC}.

\begin{proposition} \label{prop:flat-iff-right-ore}
Let $M \subseteq G$ be a submonoid of a group $G$, and let $\fp \subseteq M$ be a prime ideal. Then $M_\fp$ is flat as left $M$-set if and only if $M$ is right Ore with respect to $S = M-\fp$.
\end{proposition}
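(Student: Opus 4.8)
The plan is to prove the two implications separately, in each case passing between the monoid $M_\fp$ and the group $G$ through the canonical homomorphism $\iota \colon M_\fp \to G$ induced by the inclusion $M \hookrightarrow G$ and the universal property of localization. Throughout I would use the concrete flatness criteria (F1)--(F3) as the working tool, since both the hypothesis and the conclusion are about solving equations in $M$.

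For the direction ``right Ore $\Rightarrow$ flat'', I would first set up an explicit model of $M_\fp$ as the set of right fractions $MS^{-1} = \{ms^{-1} : m \in M,\ s \in S\} \subseteq G$. Using Definition~\ref{def:right-ore} one checks that this subset is closed under multiplication: to simplify $(m_1 s_1^{-1})(m_2 s_2^{-1})$ one rewrites $s_1^{-1}m_2$ as a right fraction, which is exactly what the Ore condition produces. A repeatedly used subroutine is that any $s_1,s_2 \in S$ have a common right multiple inside $S$, i.e.\ there are $u_1,u_2 \in M$ with $s_1u_1 = s_2u_2 \in S$; this follows by applying the Ore condition to $s_2 \in M$ and $s_1 \in S$. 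With common denominators in hand I would verify that $MS^{-1}$ has the universal property of the localization (well-definedness of the induced map reduces, after clearing denominators and cancelling in the group $G$, to an honest equation in $M$), so that $\iota$ identifies $M_\fp$ with $MS^{-1}$ and every element acquires the normal form $ms^{-1}$. Granting this model, the criteria are quick: (F1) is immediate; for (F3), if $pa = qa$ with $p,q \in M$ and $a \in M_\fp$, then applying $\iota$ and cancelling the invertible element $\iota(a)$ in $G$ forces $p=q$, so one takes $b=a$ and $s=1$; and for (F2), writing $a = a_1s_1^{-1}$, $b = b_1s_2^{-1}$ and choosing a common right multiple $s = s_1u_1 = s_2u_2 \in S$, one sets $c = s^{-1}$ and gets $a = (a_1u_1)c$, $b = (b_1u_2)c$ with $a_1u_1,\,b_1u_2 \in M$, which is precisely (F2).

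For the converse ``flat $\Rightarrow$ right Ore'', I would fix $m \in M$ and $s \in S$ and apply (F2) to the two elements $1$ and $s^{-1}m$ of $M_\fp$: this yields $c \in M_\fp$ and $p,q \in M$ with $pc = 1$ and $qc = s^{-1}m$. The crucial observation is that $p \in S$: extending the homomorphism $\phi \colon M \to \{0,1\}$ attached to $\fp$ (Proposition~\ref{prop:prime-pirashvili}) to a homomorphism $\psi \colon M_\fp \to \{0,1\}$ and evaluating on $pc = 1$ gives $\psi(p)=1$, hence $\phi(p)=1$ and $p \notin \fp$. Finally, applying $\iota$ to $pc = 1$ and $qc = s^{-1}m$ and substituting $\iota(c) = \iota(p)^{-1}$ in $G$ yields $mp = sq$ in $M$, which is the right Ore condition with $t = p \in S$ and $n = q$.

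The step I expect to be the main obstacle is the first one in the forward direction: establishing the right-fraction normal form and showing that $\iota$ embeds $M_\fp$ as $MS^{-1}$, since this is where the Ore hypothesis does its real work (both in the closure computation and in the well-definedness argument). Once this model is available, criteria (F1) and (F3) come essentially for free from the group structure of $G$, and both (F2) and its converse reduce to the single manipulation of finding, respectively exploiting, common right multiples in $S$ — which is exactly the content of the Ore condition.
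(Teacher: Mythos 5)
Your proposal is correct and follows essentially the same route as the paper: both directions hinge on applying criterion (F2) to the pair $\{1,\ s^{-1}m\}$ (for flat $\Rightarrow$ Ore, where your inlined $\{0,1\}$-homomorphism argument is exactly the paper's Lemma \ref{lmm:S-becomes-invertible}) and on producing common right denominators in $S$ via the Ore condition (for Ore $\Rightarrow$ flat). The only difference is one of explicitness: you spell out the right-fraction model $MS^{-1} \subseteq G$ and verify (F1) and (F3) by cancellation in $G$, steps the paper treats as routine and leaves implicit.
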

\begin{proof}
From Lemma \ref{lmm:S-becomes-invertible}, we know that $S = M_\fp^\times \cap M$.

First suppose that $M_\fp$ is flat as left $M$-set. Take $m \in M$ and $s \in S$. Then using flatness of $M_\fp$, we can find $m_1, m_2 \in M$ and $a \in M_\fp$ such that
\begin{gather*}
\begin{split}
s^{-1}m = m_1 a \\
1 = m_2 a.
\end{split}
\end{gather*}
It then follows that $mm_2 = sm_1$, with $m_2 \in M_\fp^\times \cap M = S$. So $M$ is right Ore with respect to $S$.

Conversely, suppose that $M$ is right Ore with respect to $S$. Then for every $m \in M$ and $s \in S$ we can find $n \in M$ and $t \in S$ such that $mt = sn$, so $s^{-1}m = nt^{-1}$. We can use this property to show that any element of $M_\fp$ can be written as $ms^{-1}$ for some $m \in M$ and $s \in S$. Now let $ms^{-1}$ and $nt^{-1}$ be two elements of $M_\fp$, with $m,n\in M$ and $s,t \in S$. To prove that $M_\fp$ is flat as a left $M$-set, it is enough to find $r \in S$ and $m_1,m_2 \in M$ such that $ms^{-1} = mm_1r^{-1}$ and $nt^{-1} = nm_2r^{-1}$. First, apply the right Ore property to find $m_1\in M$, $m_2 \in S$ such that $sm_1 = tm_2$. Now take $r = tm_2$. Because both $t$ and $m_2$ are in $S$, so is $r$. We then get $s^{-1} = m_1r^{-1}$ and $t^{-1} = m_2r^{-1}$, which is what we needed.
\end{proof}

\begin{remark}
The arguments in the above proof are well-known in ring theory. See for example \cite[Lemma II.3.1(ii)]{artin-notes}.
\end{remark}

\begin{example}
Let $G$ be the groupification of $M$. Then we can write $G=M_\fp$ for $\fp=\varnothing$. It then follows from Proposition \ref{prop:flat-iff-right-ore} that $G$ is flat if and only if $M$ is right Ore.
\end{example}

From Proposition \ref{prop:flat-iff-right-ore}, we can deduce the following:

\begin{theorem} \label{thm:prime-right-ore-then-subtopos}
Let $M \subseteq G$ be a submonoid of a group $G$, and let $\fp \subseteq M$ be a prime ideal.
Then there is a subtopos of the form $\psh(M_\fp) \subseteq \psh(M)$, with inverse image functor $-\otimes_M M_\fp$,
if and only if $M$ is right Ore with respect to $S=M-\fp$.
\end{theorem}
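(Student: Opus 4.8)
The plan is to realise $-\otimes_M M_\fp$ as the left adjoint of a canonical adjoint pair, and then to pin down exactly when that pair is a geometric embedding. Write $\phi\colon M \to M_\fp$ for the localization map. Restriction along $\phi$ gives a functor $\phi^*\colon \psh(M_\fp) \to \psh(M)$, sending a right $M_\fp$-set $Y$ to the right $M$-set with action $y\cdot m = y\cdot\phi(m)$; viewing $M_\fp$ as an $M$--$M_\fp$-biset, its left adjoint is exactly $-\otimes_M M_\fp\colon \psh(M) \to \psh(M_\fp)$. Thus I always have an adjunction $-\otimes_M M_\fp \dashv \phi^*$, and the two things to check are (i) whether the left adjoint preserves finite limits, so that $(\,-\otimes_M M_\fp,\ \phi^*)$ is a geometric morphism $\psh(M_\fp) \to \psh(M)$, and (ii) whether that geometric morphism is an embedding, i.e.\ whether its direct image $\phi^*$ is full and faithful.

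For (i), one direction is immediate: an inverse image functor preserves finite limits by definition, so if the subtopos exists with inverse image $-\otimes_M M_\fp$, then this functor preserves finite limits. For the converse I want to deduce finite-limit preservation from the right Ore condition. The subtlety is that here $-\otimes_M M_\fp$ takes values in $\psh(M_\fp)$, whereas flatness of $M_\fp$ (the notion supplied by Proposition \ref{prop:flat-iff-right-ore}) refers to the functor $-\otimes_M M_\fp\colon \psh(M) \to \sets$ obtained by regarding $M_\fp$ merely as a left $M$-set. This is resolved by the observation that finite limits in $\psh(M_\fp)$ are computed on underlying sets (the forgetful functor $\psh(M_\fp)\to\sets$ creates them); hence $-\otimes_M M_\fp\colon \psh(M)\to\psh(M_\fp)$ preserves finite limits if and only if its composite with the forgetful functor does, which is precisely the statement that $M_\fp$ is flat as a left $M$-set. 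By Proposition \ref{prop:flat-iff-right-ore} this happens exactly when $M$ is right Ore with respect to $S$, giving both directions of the equivalence at the level of geometric morphisms.

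It remains to show (ii), that whenever this geometric morphism exists it is automatically an embedding; I would do this by proving that $\phi^*$ is always full and faithful. Faithfulness is clear, since morphisms are determined by their underlying functions. For fullness, let $Y, Y'$ be right $M_\fp$-sets and let $g\colon \phi^*Y \to \phi^*Y'$ be $M$-equivariant. Since $g$ commutes with the action of each $\phi(s)$, $s\in S$, and these elements act invertibly on $Y$ and $Y'$, a one-line manipulation (replace $y$ by $y\cdot\phi(s)^{-1}$ in the equivariance identity) shows that $g$ also commutes with the action of $\phi(s)^{-1}$; as every element of $M_\fp$ is a product of elements $\phi(m)$ and $\phi(s)^{-1}$, the map $g$ is $M_\fp$-equivariant. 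Thus $\phi^*$ is full and faithful (equivalently, the localization map $\phi$ is an epimorphism of monoids, so the counit $(-\otimes_M M_\fp)\circ\phi^* \to \mathrm{id}$ is an isomorphism), and the geometric morphism is an embedding.

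The main obstacle I anticipate is step (i): correctly matching the finite-limit preservation of the $\psh(M_\fp)$-valued functor $-\otimes_M M_\fp$ with ordinary flatness of $M_\fp$ as a left $M$-set, so that Proposition \ref{prop:flat-iff-right-ore} can be applied verbatim. Once finite limits are recognised as computed pointwise this becomes routine, and the remaining embedding claim reduces to the elementary equivariance argument above.
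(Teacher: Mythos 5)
Your proposal is correct and follows essentially the same route as the paper: both directions reduce to flatness of $M_\fp$ as a left $M$-set and then invoke Proposition \ref{prop:flat-iff-right-ore}. The one place you go beyond the paper's own proof is step (ii): the paper simply identifies the direct image with the forgetful functor and leaves the embedding property implicit, whereas you explicitly verify that restriction along the localization map $M \to M_\fp$ is full and faithful (equivalently, that localization is an epimorphism of monoids, so the counit is an isomorphism); this is a genuine detail worth recording, and your argument for it is correct. Your care in distinguishing the $\psh(M_\fp)$-valued tensor functor from the $\sets$-valued one, resolved by noting that the forgetful functor creates finite limits, is likewise sound.
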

\begin{proof}
If there is such a subtopos, then $M_\fp$ must be flat as a left $M$-set, 
because the inverse image functor of a geometric morphism preserves finite limits.
By Proposition \ref{prop:flat-iff-right-ore}, it then follows that $M$ is right Ore with respect to $S$.

Conversely, if $M$ is right Ore with respect to $S$, then $M_\fp$ is flat as a left $M$-set by Proposition \ref{prop:flat-iff-right-ore}, so we get a geometric morphism $\psh(M_\fp) \to \psh(M)$ with inverse image functor $-\otimes_M M_\fp$. The direct image functor is the right adjoint of $-\otimes_M M_\fp$, which is the forgetful functor restricting the $M_\fp$-action to an $M$-action.
\end{proof}

In the remainder of the paper, we will show that every subtopos 
of monoid type of $\psh(M)$ is of the above form,
still assuming that $M$ is a submonoid of a group.

\section{\texorpdfstring{Flat left $M$-sets as topos points}{Flat left M-sets as topos points}}
\label{sec:points}

\subsection{The category of points}

For a Grothendieck topos $\mathcal{E}$, a \emph{point} is a geometric morphism $p : \sets \to \mathcal{E}$, or equivalently, a functor $p^* : \mathcal{E} \to \sets$ that preserves colimits and finite limits. The points of a topos form a category, with as morphisms $p \to q$ the natural transformations $p^* \to q^*$.

It follows from Diaconescu's theorem that:

\begin{proposition}[{See e.g.\ \cite[Corollary 1.9]{hr1}}]
The category of points of $\psh(M)$ is equivalent to the category of flat left $M$-sets and homomorphisms of left $M$-sets between them. 
\end{proposition}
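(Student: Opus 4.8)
The plan is to obtain this as the special case $\mathcal{C} = M$ of Diaconescu's theorem, which identifies geometric morphisms $\sets \to \psh(\mathcal{C})$ with flat functors $\mathcal{C} \to \sets$ (naturally in the obvious $2$-categorical sense). Viewing the monoid $M$ as a one-object category, a functor $M \to \sets$ is precisely the data of a left $M$-set, and, as recalled above through conditions (F1)--(F3) and \cite[Proposition 1.8]{hr1}, such a functor is flat in Diaconescu's sense if and only if the corresponding left $M$-set is flat. So once the theorem is invoked, the remaining content is bookkeeping: translating the general construction into monoid language and checking that morphisms correspond.

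Concretely, I would spell out the two functors realizing the equivalence. In one direction, a flat left $M$-set $A$ is sent to the point whose inverse image functor is $-\otimes_M A : \psh(M) \to \sets$; this preserves colimits (as noted above) and, by flatness, preserves finite limits, so it is genuinely the inverse image of a point. In the other direction, a point $p : \sets \to \psh(M)$ is sent to the set $A = p^*(R)$, where $R$ is the representable presheaf, i.e.\ $M$ with its right-multiplication action. Each $m \in M$ induces via the Yoneda embedding an endomorphism of $R$ given by left multiplication, which is a map of right $M$-sets because left and right multiplication commute; applying $p^*$ to these endomorphisms equips $A = p^*(R)$ with a left $M$-action. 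To see that the two assignments are mutually inverse, I would use that every right $M$-set is a colimit of copies of $R$ together with the fact that $p^*$ preserves colimits, yielding a natural isomorphism $p^* \cong -\otimes_M p^*(R)$; flatness of $p^*(R)$ then follows because $p^*$ preserves finite limits.

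On morphisms, a natural transformation $-\otimes_M A \Rightarrow -\otimes_M B$ restricts, on the representable $R$, to a map $A \to B$, and naturality forces it to be a homomorphism of left $M$-sets, while conversely any such homomorphism induces a natural transformation by functoriality of the tensor construction. The main obstacle is not in this monoid-level bookkeeping but inside Diaconescu's theorem itself: the verification that flat functors are exactly those whose colimit-extension preserves finite limits, which rests on the equivalence between flatness and (co)filteredness of the category of elements. Since this is standard and available in the cited reference, the argument reduces to the identifications above.
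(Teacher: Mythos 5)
Your proposal is correct and follows the same route as the paper, which simply invokes Diaconescu's theorem together with the characterization of flat functors from \cite[Proposition 1.8]{hr1} and otherwise defers to the cited reference \cite[Corollary 1.9]{hr1}. The extra bookkeeping you supply (the inverse functors via $-\otimes_M A$ and $p^*(R)$, and the correspondence on morphisms) is accurate and consistent with the paper's intended argument.
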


Now suppose that $M$ is a submonoid of a group $G$. It was shown in \cite{topological-groupoid} that there is an equivalence of categories
\begin{equation*}
\psh(M) \simeq \sh_G(X)
\end{equation*}
for some topological space $X$ on which the discrete group $G$ has a continuous left group action. The topological space $X$ has an explicit construction, and as remarked in \cite{topological-groupoid} this gives a method of explicitly computing the category of points of $\psh(M)$. Because, $\psh(M) \simeq \sh_G(X)$, the two toposes have the same categories of points. Further, the category of points of $\sh_G(X)$ has the following description:

\begin{proposition} \label{prop:points-equivariant-sheaves}
Let $X$ be a topological space and let $G$ be a discrete group with a continuous left action on $X$. Then the category $\pts_G(X)$ of points of $\sh_G(X)$ has
\begin{itemize}
\item as objects the elements of the sobrification $\widehat{X}$ of $X$;
\item as morphisms $x \to y$ the elements $g \in G$ such that $x \leq gy$;
\item composition defined via multiplication in the opposite group $G^\op$.
\end{itemize}
Here $\leq$ denotes the specialization order on $X$, so $x \leq gy$ means that $x$ lies in the closure of $gy$, or equivalently that $x \in U \Rightarrow gy \in U$ for every open set $U \subseteq X$. The left $G$-action on $\widehat{X}$ is the one induced by the $G$-action on $X$.
\end{proposition}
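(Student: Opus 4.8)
The plan is to reduce to the classical description of the points of the non-equivariant topos $\sh(X)$ and then to track the extra data coming from the $G$-action. Recall that the points of $\sh(X)$ are exactly the elements of the sobrification $\widehat{X}$, the point attached to $x$ having inverse image the stalk functor $F \mapsto F_x$, and that there is a (unique) natural transformation $(-)_x \Rightarrow (-)_y$ between stalk functors precisely when $x \leq y$, so that $\pts(X) \simeq \widehat{X}$ as a poset. I would lift this picture across the forgetful functor $U \colon \sh_G(X) \to \sh(X)$, which is the inverse image of the canonical geometric surjection $\pi \colon \sh(X) \to \sh_G(X)$, and then show that the $G$-action is exactly what enlarges the thin poset $\widehat X$ into the category of the statement.

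First I would construct a functor from the category described in the statement to $\pts_G(X)$. To $x \in \widehat{X}$ I assign the point $p_x$ with $p_x^*(F) = (UF)_x$, the stalk at $x$ of the underlying sheaf; this preserves finite limits and colimits because both $U$ and the stalk functor do, so $p_x$ is a genuine point. An object $F$ of $\sh_G(X)$ carries, for each $g \in G$, an isomorphism $\alpha_g \colon L_g^* F \xrightarrow{\sim} F$, where $L_g \colon X \to X$ is translation by $g$, satisfying the cocycle identity $(\alpha_h)_z \circ (\alpha_g)_{hz} = (\alpha_{gh})_z$ on stalks. Given $g \in G$ with $x \leq gy$, I define $\eta^{(g)} \colon p_x^* \Rightarrow p_y^*$ as the composite
\[
F_x \longrightarrow F_{gy} \xrightarrow{(\alpha_g)_y} F_y,
\]
where the first map is the specialization map attached to $x \leq gy$ and the second is the stalk of the equivariant structure. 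Both are natural in $F \in \sh_G(X)$, so $\eta^{(g)}$ is a morphism of points $p_x \to p_y$.

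Next I would verify functoriality, which is where the opposite group enters. For $g \colon x \to y$ and $h \colon y \to w$ (so $x \leq gy$, $y \leq hw$, hence $x \leq ghw$ after applying the homeomorphism $L_g$), naturality of $\alpha_g$ with respect to the specialization $y \leq hw$ together with the cocycle identity at $z = w$ show that $\eta^{(h)} \circ \eta^{(g)} = \eta^{(gh)}$. Thus on labels the categorical composite of $g$ followed by $h$ is the product $gh$ taken in $G$, i.e.\ composition is multiplication in $G^\op$. The identity at $x$ is $\eta^{(e)}$, and a translate $gy$ becomes isomorphic to $y$ via the mutually inverse $\eta^{(g)}$ and $\eta^{(g^{-1})}$, which is why the objects are indexed by all of $\widehat{X}$ rather than by $\widehat{X}/G$.

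It then remains to prove that this functor is an equivalence. Faithfulness is concrete: I would evaluate on the induced equivariant sheaf $\pi_!\underline{1}$, whose stalk at any point is canonically the set $G$ and on which $\eta^{(g)}$ acts as right translation $k \mapsto kg$; this recovers $g$ from the natural transformation, and incidentally re-confirms the $G^\op$ law since right translations compose contravariantly. The main obstacle is essential surjectivity, together with fullness: I must show that every point of $\sh_G(X)$ is isomorphic to some $p_x$ and that every natural transformation $p_x^* \Rightarrow p_y^*$ is one of the $\eta^{(g)}$. I would handle both by presenting $\sh_G(X)$ through the site whose objects are the opens of $X$, whose morphisms $U \to V$ are the $g \in G$ with $gU \subseteq V$, and whose topology is generated by open covers, and then analyzing the flat continuous functors to $\sets$ via Diaconescu's theorem. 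The technical heart is to prove that flatness forces such a functor to be concentrated, in the colimit defining it, at a single completely prime filter of opens, i.e.\ at a point $x \in \widehat{X}$; once this localization is established, the $G$-labels in the site contribute exactly the hom-sets $\{g : x \leq gy\}$ and the resulting morphisms are identified with the $\eta^{(g)}$ through the explicit stalk formula above. Equivalently, this step is the statement that topos points lift along the open surjection $\pi$, and it is there that I expect the real work to lie.
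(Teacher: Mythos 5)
The parts of your argument that you actually carry out are correct: the functor $x \mapsto p_x$ with $p_x^*(F)=(UF)_x$, the morphisms $\eta^{(g)}$ obtained as the specialization map $F_x \to F_{gy}$ followed by the stalk $(\alpha_g)_y$ of the equivariant structure, the computation $\eta^{(h)}\circ\eta^{(g)}=\eta^{(gh)}$ yielding the $G^\op$ composition law, and the faithfulness check against the induced equivariant sheaf with stalk $G$ all hold up. (For context: the paper does not prove this proposition either, but delegates it to \cite[Subsection 1.3.5]{thesis-jens}, so there is no in-paper argument to compare yours against.)

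The gap is that fullness and essential surjectivity --- that every point of $\sh_G(X)$ is isomorphic to some $p_x$, and that every natural transformation $p_x^*\Rightarrow p_y^*$ equals $\eta^{(g)}$ for some $g$ with $x \leq gy$ --- are precisely the substantive content of the proposition, and you defer them explicitly (``it is there that I expect the real work to lie''). What you have established is only that the category described in the statement maps faithfully to $\pts_G(X)$; without the converse half, the proposition is not proved. The strategy you name (present $\sh_G(X)$ by the site whose objects are the opens of $X$, whose morphisms $U\to V$ are the $g\in G$ with $gU\subseteq V$, with covers generated by open covers, and apply Diaconescu) is the right one, but as written it is a plan rather than an argument: you still have to show that for a flat continuous functor $A$ on this site the opens with $A(U)\neq\varnothing$ form a completely prime filter, i.e.\ a point $x\in\widehat{X}$, that $A$ is then isomorphic to the flat functor attached to $p_x$, and that naturality against the $G$-labelled morphisms of the site forces any transformation between two such functors to be given by a single group element $g$ with $x\leq gy$ (rather than, say, by incompatible choices on different opens). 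Until that step is carried out, the equivalence is not established.
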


For a proof, see \cite[Subsection 1.3.5]{thesis-jens}.

We will recall from \cite{topological-groupoid} how to construct the topological space $X$, its sobrification $\widehat{X}$, and the action of $G$. Afterwards, we will show how to compute the flat left $M$-set $A_x$ corresponding to an element $x \in \widehat{X}$. Similarly, we will be able to describe the morphisms of left $M$-sets $A_x \to A_y$ in terms of the group action of $G$.

Let $\leq$ be the partial order on $G/M^\times$ given by
\begin{equation}
[a] \leq [b] ~\Leftrightarrow~ \exists m \in M,~ am = b.
\end{equation} 
Let $X$ be the topological space with as elements the elements of $G/M^\times$, and as open sets the upwards closed sets for the above partial order. There is a natural left $G$-action on $X$ by multiplication on the left, and this action is continuous.

With the topological space $X$ and the $G$-action as above, there is an equivalence $\psh(M) \simeq \sh_G(X)$, see \cite{topological-groupoid}. To get a complete description of the category of points according to \ref{prop:points-equivariant-sheaves}, the most difficult step that remains is the computation of the sobrification $\hat{X}$ of $X$.

\subsection{Computing the sobrification $\widehat{X}$.} One construction of the sobrification $\hat{X}$ is as follows. The elements of $\hat{X}$ are the irreducible closed subsets of $X$. The function $X \to \hat{X}$ then sends an element $x \in X$ to the closure of $x$ (this is always an irreducible closed subset). The irreducible closed subsets of $X$ are ordered under inclusion, and this induces a partial order on $\hat{X}$, extending the specialization order on $X$. The open sets for $\hat{X}$ are then the sets of the form
\begin{equation*}
\hat{U} = \{ x \in \hat{X} : \exists m \in U,~ m \leq x \}
\end{equation*}
for an open subset $U \subseteq X$. This induces a bijection between the frame of open subsets of $X$ and the frame of open subsets of $\hat{X}$.

\begin{remark}
In our case, recall that the topology on $X$ comes from a partial order $\leq$ on $X$, by taking as open sets the subsets $U \subseteq X$ that are upwards closed. This partial order is then also the specialization order for the topology, i.e.\ $x \leq y$ if and only if $x$ is in the closure of $\{y\}$.

Some terminology: the topology on the poset $X$ is usually called the \emph{Alexandrov topology}, and $X$ with this topology is called \emph{Alexandrov-discrete}. The induced topology on $\hat{X}$ is then called the \emph{Scott topology}.
\end{remark}

How do we characterize the irreducible closed subsets of $X = G/M^\times$? The open sets of $X$ are the upwards closed sets, so the closed sets are the downwards closed sets. We claim that the irreducible closed subsets correspond to the ideals in $X$. This appears as an exercise in \cite{compendium-2}, but we give a proof below for completeness.

\begin{lemma}[{\cite[Exercise V-4.9]{compendium-2}}] \label{lmm:ideals}
Let $X$ be a poset with the Alexandrov topology. The irreducible closed subsets of $X$ are precisely the ideals of $X$, i.e.\ the subsets $F \subseteq X$ such that
\begin{enumerate}
\item $F$ is non-empty;
\item $F$ is downwards closed, i.e.\ $x \leq y$ and $y \in F$ implies $x \in F$;
\item $F$ is upwards directed, i.e.\ if $x, y \in F$ then there is a $z \in F$ with $x,y \leq z$.
\end{enumerate}
\end{lemma}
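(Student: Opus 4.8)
The plan is to translate everything into the order-theoretic language of the Alexandrov topology and then apply the standard open-set characterization of irreducibility. Recall that in this topology the open sets are exactly the up-sets, so the closed sets are exactly the down-sets; in particular, condition (2) for $F$ is literally the statement that $F$ is closed, and condition (1) is the nonemptiness built into the definition of an irreducible subset. The one structural fact I would isolate at the outset is that the smallest open neighbourhood of a point $x \in X$ is the principal up-set $\uparrow x = \{w \in X : x \leq w\}$. I would also record the form of irreducibility I intend to use: a nonempty closed set $F$ is irreducible if and only if, for every pair of open sets $U, V \subseteq X$ that both meet $F$, the intersection $U \cap V$ also meets $F$ (equivalently, any two nonempty relatively open subsets of $F$ intersect).

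For the direction that an ideal is irreducible, I would take open sets $U, V$ with $x \in U \cap F$ and $y \in V \cap F$, use upward directedness (3) to produce $z \in F$ with $x, y \leq z$, and then observe that since $U$ and $V$ are up-sets containing $x$ and $y$ respectively, we get $z \in U \cap V \cap F$. This shows $U \cap V$ meets $F$, so $F$ satisfies the irreducibility criterion.

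For the converse, I would start from an irreducible closed set $F$; closedness gives (2) and irreducibility gives (1) by convention, so only upward directedness remains. Given $x, y \in F$, I would feed the minimal neighbourhoods $U = \uparrow x$ and $V = \uparrow y$ into the irreducibility criterion: each meets $F$ (at $x$, resp.\ $y$), so $U \cap V \cap F \neq \varnothing$, and any element $z$ of this intersection satisfies $x \leq z$, $y \leq z$, and $z \in F$, i.e.\ is the required common upper bound inside $F$. I do not expect a genuine obstacle here; the only point demanding care is applying the irreducibility criterion in the subspace topology correctly — remembering that the relevant open subsets of $F$ are the intersections $F \cap U$ with $U$ open in $X$, and that the closed subsets of $X$ need not be principal down-sets — so I would phrase irreducibility via open sets of $X$ meeting $F$ rather than via down-sets generated by individual points.
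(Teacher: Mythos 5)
Your proposal is correct and follows essentially the same route as the paper's proof: both characterize irreducibility via the criterion that any two open sets meeting $F$ have intersection meeting $F$, use upward directedness to produce the common point $z \in U \cap V \cap F$ in one direction, and plug in the principal up-sets $\uparrow x$ and $\uparrow y$ in the other. No gaps.
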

\begin{proof}
We already saw that $F \subseteq X$ is closed if and only if it is downwards closed. Further, $F$ is irreducible if $F = F' \cup F''$ with $F',F'' \subseteq X$ closed implies that $F = F'$ or $F=F''$. Equivalently, for any two open subsets $U,V \subseteq X$ with $U\cap F \neq \varnothing$ and $V \cap F\neq\varnothing$ we have that $U \cap V \cap F \neq \varnothing$. So if $F$ is irreducible, then it is upwards directed, by taking $U$ and $V$ to be the upwards closures of $x$ and $y$, respectively. Conversely, suppose that $F$ is upwards directed. Take $U,V \subseteq X$ open (i.e.\ upwards directed) such that $U \cap F \neq \varnothing$ and $V \cap F \neq \varnothing$. Take $x \in U \cap F$ and $y \in V \cap F$, and further take $z \in F$ with $x,y \leq z$. Then $z \in U \cap V \cap F$. 
\end{proof}

The $G$-action on $\hat{X}$ is now defined as $g \cdot F = \{ gx : x \in F \}$ for an element $g \in G$ and a ideal $F \subseteq X$. 

\subsection{Translation from points to flat left $M$-sets}

Because $\psh(M)\simeq \sh_G(X)$, there is an equivalence of categories
\begin{equation} \label{eq:category-of-points}
\Flat(M) \simeq \mathbf{Pts}_G(X)
\end{equation}
where $\Flat(M)$ denotes the category of flat left $M$-sets and $\mathbf{Pts}_G(X)$ denotes the category of points of $\sh_G(X)$, as described by Proposition \ref{prop:points-equivariant-sheaves}.

Because $X$ is a $T_0$-space, the natural map to the sobrification $X \to \hat{X}$ is an injection. So we will identify $X$ with a subspace of $\hat{X}$. The elements of $X$ all lie in the same $G$-orbit, so by Proposition \ref{prop:points-equivariant-sheaves} they define isomorphic points of $\sh_G(X)$. From the construction of the equivalence $\sh_G(X) \simeq \setswith{M}$ in \cite{topological-groupoid} it follows that the point $[1] \in X$ corresponds to the flat left $M$-set $M$. We can now determine the flat left $M$-set $A_x$ corresponding to an arbitrary point $x \in \hat{X}$, because the elements of $A_x$ are given by morphisms $M \to A_x$, which are by the equivalence (\ref{eq:category-of-points}) the same as the elements $g \in G$ such that $1 \leq gx$. So we find that:
\begin{equation*}
A_x = \{ g \in G : gx \geq 1 \}.
\end{equation*}
The left $M$-action on $A_x$ is given by multiplication. Further, for an element $h \in G$, we find that
\begin{equation} \label{eq:base-change-Ay}
A_{hx} = \{ g \in G : ghx \geq 1 \} = A_x \, h^{-1}.
\end{equation}

\subsection{Morphisms of flat left $M$-sets} \label{ssec:points-morphisms} For two points $x,y \in \hat{X}$, we can now also compute the set of homomorphisms $A_x \to A_y$. From the equivalence (\ref{eq:category-of-points}) and Proposition \ref{prop:points-equivariant-sheaves}, we can make the identification:

\begin{equation*}
\HOM^M\!(A_x,A_y) = \{ g \in G : x \leq gy \}
\end{equation*}
(we write the $\HOM^M$ for morphisms of left $M$-sets and $\HOM_M$ for morphisms of right $M$-sets). Here the morphism corresponding to $g \in G$ is the one that sends $a \in A_x$ to $ag \in A_y$. Note that composition of morphisms corresponds to multiplication in the opposite group $G^\op$. In particular, we find that
\begin{equation*}
\END^M\!(A_x) = \{ g \in G : gx \geq x  \}^\op \subseteq G^\op.
\end{equation*}
is a submonoid of $G^\op$.

We will write $M_x = \END^M\!(A_x)^\op$, so
\begin{equation*}
M_x = \{ g \in G : gx \geq x \} \subseteq G.
\end{equation*}
In particular, $M_{[1]} = M$ as submonoids of $G$. The inclusion $M_x \to \END^M(A_x)^\op$ defines a right $M_x$-action on $A_x$, and this right $M_x$-action is compatible with the left $M$-action. It is given by $a \cdot m = am$, for $a \in A_x$ and $m \in M_x$, where multiplication on the right hand side of the equation happens in $G$. 

For an element $h \in G$, we find that
\begin{equation} \label{eq:base-change-My}
M_{hx} = hM_x h^{-1}.
\end{equation}

\subsection{Ideals containing $[1]$.} If $F\subseteq X$ is an ideal, and $[g] \in F$ is an element with representative $g \in G$, then $g^{-1}\cdot F$ is an ideal containing $[1]$. So up to isomorphism, every point of $\sh_G(X)$ is given by an ideal $F \subseteq X$ that contains $[1]$.

Filters containing $[1]$ are easier in the following sense:

\begin{lemma} \label{lmm:ideals-on-Y}
Let $F$ be an ideal containing $[1]$. Let
\begin{equation*}
Y = \{ x \in X : x \geq [1] \} \subseteq X.
\end{equation*}
Then $F \cap Y \subseteq Y$ is an ideal of $Y$. Conversely, if $F' \subseteq Y$ is an ideal of $Y$, and let $F$ be the downwards closure of $F'$ in $X$. Then $F \subseteq X$ is an ideal of $X$. These two procedures are inverse to each other, so $\hat{Y} = \{ x \in \hat{X} : x \geq [1] \}$.
\end{lemma}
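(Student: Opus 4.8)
The plan is to verify directly that the two assignments $F \mapsto F \cap Y$ and $F' \mapsto (\text{downward closure of } F')$ are well-defined between the relevant classes of ideals and are mutually inverse, and then to translate this into the stated description of $\hat{Y}$ via Lemma \ref{lmm:ideals}. Throughout I would work purely at the level of the poset $X = G/M^\times$, since by Lemma \ref{lmm:ideals} the elements of $\hat{X}$ and $\hat{Y}$ are exactly the ideals of $X$ and of $Y$, respectively.

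First I would check that $F \cap Y$ is an ideal of $Y$ whenever $F$ is an ideal of $X$ with $[1] \in F$. Nonemptiness is immediate since $[1] \in F \cap Y$, and downward closedness inside $Y$ is inherited from $F$. For upward directedness, given $x, y \in F \cap Y$ I use directedness of $F$ to get $z \in F$ with $x, y \leq z$; the key observation is that $z$ automatically lies in $Y$, because $z \geq x \geq [1]$. Next I would check that the downward closure $F = \{ x \in X : x \leq y \text{ for some } y \in F' \}$ of an ideal $F' \subseteq Y$ is an ideal of $X$. Nonemptiness and downward closedness are built into the construction, while upward directedness follows by pushing $x_1 \leq y_1$ and $x_2 \leq y_2$ (with $y_i \in F'$) up to a common bound $z \in F'$ of $y_1, y_2$, which then dominates $x_1, x_2$. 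Note that $F$ automatically contains $[1]$: any $y \in F'$ satisfies $y \geq [1]$, so $[1] \in F$.

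Then I would verify that the two procedures are inverse. Starting from $F'$, the inclusion $F' \subseteq F \cap Y$ is clear, and the reverse inclusion uses downward closedness of $F'$ in $Y$: any $x \in F \cap Y$ satisfies $x \leq y$ for some $y \in F'$ with $x, y \in Y$, hence $x \in F'$. Starting from an ideal $F \ni [1]$, the inclusion $(\text{downward closure of } F \cap Y) \subseteq F$ is immediate; for the reverse inclusion I use directedness of $F$ together with $[1] \in F$: given $x \in F$, choose $z \in F$ with $x, [1] \leq z$, so that $z \geq [1]$ forces $z \in F \cap Y$, and $x \leq z$ shows $x$ lies in the downward closure of $F \cap Y$. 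Both assignments are visibly inclusion-preserving in both directions, so they give an isomorphism of posets between the ideals of $Y$ and the ideals of $X$ containing $[1]$.

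Finally, I would invoke Lemma \ref{lmm:ideals} to identify $\hat{X}$ and $\hat{Y}$ with the posets of ideals of $X$ and of $Y$. Under this identification the point $[1] \in \hat{X}$ is its closure $\{ x \in X : x \leq [1] \}$, and for an ideal $F$ the relation $F \geq [1]$ in $\hat{X}$ means exactly $[1] \in F$, since $F$ is downward closed. Thus the ideals of $X$ containing $[1]$ are precisely the $x \in \hat{X}$ with $x \geq [1]$, and the poset isomorphism above becomes the asserted equality $\hat{Y} = \{ x \in \hat{X} : x \geq [1] \}$. The step I expect to be the real content is the directedness argument establishing $F \subseteq (\text{downward closure of } F \cap Y)$, where one genuinely needs both that $F$ is upward directed and that it contains $[1]$; the remaining verifications are routine bookkeeping with the ideal axioms.
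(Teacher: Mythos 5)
Your proof is correct and follows exactly the route the paper intends: the paper's own proof is the one-line remark that the lemma ``follows directly from the axioms of an ideal, as stated in Lemma \ref{lmm:ideals}'', and your argument is precisely the detailed verification of those axioms for the two assignments, including the one genuinely non-trivial step (using directedness together with $[1]\in F$ to recover $F$ from $F\cap Y$). Nothing is missing.
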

\begin{proof}
This follows directly from the axioms of an ideal, as stated in Lemma \ref{lmm:ideals}.
\end{proof}

By the above, we only have to determine the ideals on $Y=M/M^\times$ in order to understand the ideals on $X$. Indeed, each ideal on $X$ is of the form $g\cdot F$, for $g \in G$ and $F$ the downwards closure of an ideal on $Y$.

\section{Examples}
\label{sec:examples}

In Section \ref{sec:subtoposes}, we will use the calculations from the Section \ref{sec:points} to study the subtoposes of monoid type of $\psh(M)$. But first, we show how to apply the results of Section \ref{sec:points} to determine the category of flat left $M$-sets in some examples. We use the notations of Section \ref{sec:points}.

\subsection{Free monoids} \label{ssec:examples-free}

Let $M = \langle{a_1,\dots,a_k}\rangle$ be the free monoid on $k$ variables $a_1,\dots,a_k$. To determine the points of $\psh(M)$, we first have to determine the ideals on $Y = M/M^\times$, where $Y$ is ordered by the relation $[x] \leq [y] \Leftrightarrow \exists m \in M,~ y = xm$.

In this case, the group of units $M^\times$ is trivial, so $Y = M$. 

The finite ideals in $Y$ are of the form $y = \{1,x_0,x_0x_1,x_0x_1x_2,\dots,x_0x_1\dots x_r\}$ with $x_i \in \{a_1,\dots,a_k\}$ for all $0 \leq i \leq r$. Using the formulas \eqref{eq:base-change-Ay} \eqref{eq:base-change-My} we find here $A_y = Mm^{-1}$ and $M_y = mMm^{-1}$. 

The infinite ideals on the poset $Y=M$ are more interesting. They are given by the infinite words
\begin{equation*}
x_0x_1x_2\dots
\end{equation*}
with $x_i \in \{a_1,\dots,a_k\}$ for all $i \in \NN$. The corresponding ideal of $Y$ is the subset
\begin{equation*}
\{ 1, x_0, x_0x_1, x_0x_1x_2, \dots \} \subseteq Y
\end{equation*}
and by taking the downwards closure we find the corresponding ideal in $X$.

Note that two distinct infinite ideals of $Y$ are necessarily incomparable. This means that if $y \in \hat{Y}$ is not finite, then $y \leq gy$ if and only if $y = gy$. It follows that all endomorphism monoids of flat left $M$-sets are groups. We compute some examples (we write $a = a_1$ and $b=a_2$).
\begin{itemize}
\item Take the point $y = aaa\dots$. Then $y = gy$ if and only if $g \in \langle{a,a^{-1}}\rangle$. So the endomorphism monoid is isomorphic to $(\ZZ,+)$.
\item Take the point $y = abbaabbaabba\dots$. Then $y = gy$ if and only if $g$ is in the free subgroup generated by the element $abba$. So again, the endomorphism monoid is isomorphic to $(\ZZ,+)$. 
\end{itemize}
In all cases where the point is represented by a ``periodic'' sequence that repeats the exact same pattern infinitely many times, we will be in a similar situation, where the endomorphism monoid is isomorphic to $(\ZZ,+)$. 
More generally, if the point is of the form $gy$ with $y$ a periodic sequence and $g \in G$, then we can use the formula $M_{gy} = gM_y g^{-1}$. So again, the endomorphism monoid is isomorphic to $(\ZZ,+)$. The conclusion is that the endomorphism monoid of the flat left $M$-set associated to $y$ is isomorphic to $(\ZZ,+)$ whenever the sequence $y$ is \emph{eventually periodic}.

We claim that if $y$ is not eventually periodic, then the endomorphism monoid of the associated flat left $M$-set is trivial. Take $g \in G$. Write $y = x_0x_1x_2\dots$ and $gy = x_0'x_1'x_2'\dots$. If $gy \geq 1$ then necessarily $g=x_0'x_1'\dots x_r'x_s^{-1}x_{s-1}^{-1}\dots x_0^{-1}$ for some $r,s \in \NN$. We can then rewrite $gy$ as
\begin{equation*}
gy = x_0'\dots x_r' x_{s+1}x_{s+2}\dots 
\end{equation*}
In other words, $x_{s+i}=x_{r+i}'$ for all $i \geq 1$. Now suppose that $gy=y$. Then with $r$ and $s$ as above, we have that $x_{s+i}=x_{r+i}$ for all $i \geq 1$. If $r\neq s$ then this implies that $y$ is eventually periodic, with period dividing $|r-s|$. If $r=s$, then it follows from $g=x_0'x_1'\dots x_r'x_s^{-1}x_{s-1}^{-1}\dots x_0^{-1}$ that $g$ is trivial. We conclude that, if $y$ is not eventually periodic, then the endomorphism monoid of the associated flat left $M$-set is trivial.

\subsection{The Arithmetic Site}

The Connes--Consani Arithmetic Site \cite{connes-consani} \cite{connes-consani-geometry-as} has as underlying topos the topos $\psh(M)$ for $M = \mathbb{N}^{\times}_+$ the monoid of nonzero natural numbers under multiplication. In this case, again $M^\times = \{1\}$, so $Y = \mathbb{N}^{\times}_+$, with as partial ordering the division relation. The groupification of $\mathbb{N}^{\times}_+$ is the group of strictly positive rational numbers $\QQ_+^\times$.

In \cite[Proposition 3]{llb-covers-general-version}, it is shown that ideals of $Y$ correspond to \emph{supernatural numbers}, i.e.\ formal infinite products $\prod_{p} p^{e_p}$ indexed by the prime numbers, with $e_p \in \NN \cup \{+\infty\}$ for each prime $p$. The ideal $F_s$ corresponding to a supernatural number $s$ is
\begin{equation*}
F_s = \{ n \in \mathbb{N}^{\times}_+ : n | s \},
\end{equation*}
see the proof of \cite[Proposition 3]{llb-covers-general-version}. Here we take the natural division relation on supernatural numbers.

The flat left $\mathbb{N}^{\times}_+$-set associated to the supernatural number $y = \prod_p p^{e_p}$ is 
\begin{equation} \label{eq:Ay-arithm-site}
A_y = \{ \tfrac{a}{b} \in \QQ_+^* : b | y  \},
\end{equation}
as shown in the proof of \cite[Theorem 2]{llb-covers-general-version}. We can also compute that
\begin{equation} \label{eq:My-arithm-site}
M_y = \{ \tfrac{a}{b} \in \QQ_+^* : \forall p \text{ prime},~p | b \Rightarrow p \notin \Sigma_y \}
\end{equation}
with $\Sigma_y$ the set of primes $p$ such that the exponent $e_p$ is finite.

If we instead take $M = \ZZ^\ns$ the nonzero integers under multiplication, then $Y = M/M^*$ can still be identified with $\mathbb{N}^{\times}_+$ with partial ordering given by division. So the filters on $Y$ are the same as above, and the analogues of \eqref{eq:Ay-arithm-site} and \eqref{eq:My-arithm-site} are
\begin{equation*}
A_y = \{ \tfrac{a}{b} \in \QQ^* : b | y  \},\quad M_y = \{ \tfrac{a}{b} \in \QQ^* : \forall p \text{ prime},~p | b \Rightarrow p \notin \Sigma_y \}.
\end{equation*}

\subsection{Matrices with integer coefficients}

Let $M = \M_2^\ns(\ZZ)$ be the $2\times 2$-matrices with integer coefficients and nonzero determinant. The groupification is $G = \GL_2(\QQ)$. We already know from \cite{arithmtop} that the poset of filters on $Y = M/M^\times$ is given by
\begin{equation*}
\M_2(\ZZZ)\slash \GL_2(\ZZZ)
\end{equation*}
with $[x] \geq [y]$ if and only if there is some $m \in \M_2(\ZZZ)$ such that $x = ym$. 
For $y \in \M_2(\ZZZ)\slash \GL_2(\ZZZ)$, the associated flat $M$-set is
\begin{equation*}
A_y = \{ g \in \GL_2(\QQ) : gy \in \M_2(\ZZZ) \}.
\end{equation*}
Further, we have
\begin{equation*}
M_y = \{ g \in \GL_2(\QQ) : gy \geq y \}.
\end{equation*}
A more detailed overview of what $A_y$ and $M_y$ can look like, would lead us too far.
We can however discuss some special cases:
\begin{itemize}
\item If $y$ is the zero matrix, then $A_y = M_y = \GL_2(\QQ)$. 
\item Suppose that $y = \begin{pmatrix}
1 & 0 \\
0 & 0 
\end{pmatrix}$. We write an element $g \in G$ as $g = \begin{pmatrix}
a & b \\
c & d
\end{pmatrix}$, so $gy = \begin{pmatrix}
a & 0 \\
c & 0 
\end{pmatrix}$. Now $gy \geq y$ if there is an $m \in \M_2(\widehat{\ZZ})$ such that $gy = ym$. If we let $m = \begin{pmatrix}
z_{11} & z_{12} \\
z_{21} & z_{22}
\end{pmatrix}$ then $gy = ym$ holds if and only if $c = 0$ and $z_{11}=a$, $z_{12} = 0$. It follows that $gy \geq y$ if and only if $c = 0$ and $a \in \ZZ$. We can symbolically write this down as
\begin{equation*}
M_y = \begin{pmatrix}
\ZZ & \QQ  \\
0 & \QQ
\end{pmatrix} \cap \GL_2(\QQ).
\end{equation*}
\end{itemize}

Note that the category of points of $\psh(\M_2^\ns(\ZZ))$ is equivalent to the category of torsionfree abelian groups of rank $2$ and injective homomorphisms between them, see \cite[Proposition 4.1]{thesis-jens}.

\subsection{Torus knot monoids} Consider the monoid $M = \langle{a,b : a^k = b^l}\rangle$ with $k,l \geq 2$ natural numbers. We will call $M$ the \emph{torus knot monoid}, following \cite{elder-kalka}. 

What are the ideals on $Y = M/M^\times$? Note that the unit group $M^\times$ is trivial, and that the ordering relation on $Y = M$ is then $x \leq y \Leftrightarrow \exists m \in M,~ y = mx$. To simplify the discussion, we will write $M$ as a submonoid of a more well-behaved monoid, as follows.

The quotient $H = \langle{a,b~:~ a^k=b^l=1}\rangle$ of $M$ is a group. We can further introduce a degree function on $M$, namely the monoid homomorphism $\deg : M \to \NN$ sending $a$ to $l$ and $b$ to $k$, with $\NN$ seen as monoid under addition. We claim that the map $M \to \NN \times H,~ m \mapsto (\deg(m),[m])$ is injective. First, we write
\begin{equation*}
\delta(h) = \min_{[m]=h} \deg(m).
\end{equation*}
where the minimum is taken over all $m \in M$ with $[m]=h$. Now take $m,m' \in M$ such that $[m]=[m']$ and $\deg(m)=\deg(m')$. We prove that $m=m'$, using induction on $\deg(m)=\deg(m')$. As a base case, consider the situation $\deg(m)=\deg(m')=\delta([m])$. From $[m]=[m']$ it then follows that either $m=m'$ or $m,m' \in McM$ for $c = a^k=b^l$. In the second case, we can write $m=m_0c$, $m'=m_0'c$ for $m_0,m_0' \in M$. This gives a contradiction, because $\deg(m_0) = \deg(m)-\deg(c) < \deg(m)$ and $[m_0] = [m]=g$. In the induction step, we get from $[m]=[m']$ again that either $m=m'$ or otherwise $m=m_0c$, $m'=m_0'c$ for $m_0,m_0' \in M$. In the second case, we get $m_0=m_0'$ by induction, from which $m=m'$.

Via the above procedure, we now view $M$ as a submonoid of $\NN \times H$. The elements of $M$ are then of the form $(\delta(h)+\lambda kl, h)$ for a unique $\lambda \in \NN$, $h \in H$. We call $\lambda$ the \emph{level} of $m$, and $\nu = \delta(h)$ the \emph{niveau} of $m$, in the spirit of \cite[\S2.1]{arithmtop}. We write the level and niveau as $\lambda(m)$ resp.\ $\nu(m)$ if we want to stress the dependency on $m$.

Recall our notation $c = a^k=b^l$.

\begin{lemma} \label{lmm:level}
For $m \in M$ an element of level $\lambda$, we have $m \in Mc^\lambda$ and $\lambda$ is maximal with this property.
\end{lemma}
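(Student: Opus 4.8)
The plan is to avoid manipulating words in $a$ and $b$ altogether, and instead to work entirely through the embedding $M \hookrightarrow \NN \times H$, $m \mapsto (\deg(m),[m])$, that was just established. Only two facts about $c = a^k = b^l$ are needed: that $\deg(c) = kl$ (immediate since $\deg(c) = k\deg(a) = kl$), and that $[c] = 1$ in $H$ (since $a^k = 1$ there). Writing $h = [m]$, the hypothesis that $m$ has level $\lambda$ means exactly that $\deg(m) = \delta(h) + \lambda kl$, and this reformulation is what makes both halves of the statement tractable.

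For maximality I would argue by degree counting. Suppose $m \in Mc^j$, say $m = m'' c^j$ with $m'' \in M$. Applying $\deg$ gives $\deg(m) = \deg(m'') + jkl$, and projecting to $H$ gives $[m] = [m''][c]^j = [m'']$ since $[c] = 1$, so $[m''] = h$. By definition of $\delta$, any element of $H$-class $h$ has degree at least $\delta(h)$, so $\deg(m'') \geq \delta(h)$ and therefore $\deg(m) \geq \delta(h) + jkl$. Comparing with $\deg(m) = \delta(h) + \lambda kl$ forces $j \leq \lambda$, which is precisely the claim that $\lambda$ is maximal.

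It remains to prove the membership $m \in Mc^\lambda$ itself, and here I would use the level-$0$ representative. Since $\delta(h)$ is the minimum of the nonempty set $\{\deg(m') : [m'] = h\}$ of natural numbers, it is attained by some $m_0 \in M$ with $[m_0] = h$ and $\deg(m_0) = \delta(h)$. Now $m_0 c^\lambda \in M$ has degree $\delta(h) + \lambda kl$ and $H$-class $[m_0][c]^\lambda = h$, so $m_0 c^\lambda$ and $m$ have the same image in $\NN \times H$. By injectivity of $M \to \NN \times H$ they coincide, giving $m = m_0 c^\lambda \in Mc^\lambda$.

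The step I would flag as the main obstacle is resisting the natural but painful route of exhibiting $m \in Mc^\lambda$ by rewriting $m$ explicitly as a word, which would require a normal-form analysis of the torus knot monoid. The embedding into $\NN \times H$ circumvents this: once injectivity is available, it suffices to produce \emph{any} element with the correct degree and $H$-class, and $m_0 c^\lambda$ does the job. The only point to verify carefully is that $m_0$ really exists, which follows from $M \to H$ being surjective (each generator of $H$ is the image of a generator of $M$) together with the minimum defining $\delta(h)$ being attained in $\NN$.
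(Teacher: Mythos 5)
Your proof is correct and follows essentially the same route as the paper: both factor $m$ through the embedding $M \hookrightarrow \NN \times H$ as $(\delta(h),h)\cdot(\lambda kl,1) = m_0 c^\lambda$ and use minimality of $\delta(h)$ for the maximality claim. Your version merely spells out the existence of $m_0$ and the degree-counting step that the paper leaves implicit.
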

\begin{proof}
Let $(\delta(h)+\lambda kl, h)$ be an arbitrary element of $M$. Then we can write
\begin{equation*}
(\delta(h)+\lambda kl, h) = (\delta(h), h)(\lambda kl, 1)
\end{equation*}
with $(\lambda kl, h) = c^\lambda$ and $(\delta(h),h) \notin Mc$.
\end{proof}

We now distinguish three types of ideals of the poset $Y=M$. 

If within an ideal $F \subseteq Y$ the level is unbounded, i.e.\ $\sup_{m \in F} \lambda(m) = +\infty$, then using Lemma \ref{lmm:level}, we see that $F$ contains $c^N$ for each $N \in \NN$, because $F$ is downwards closed.
Now let $m = (\delta(h) + \mu kl, h)$ be an arbitrary element of $M$. Then
\begin{equation*}
(\delta(h^{-1}),h^{-1})(\delta(h)+\mu kl, h) = (\delta(h^{-1}+\delta(h)+\mu kl, 1),
\end{equation*}
and the element on the right hand set is a power of $c$. Because $F$ is downwards closed, it follows that $m \in F$. But $m\in M$ was arbitrary, so $F=M$. Conversely, we can check that $F=M$ is indeed an ideal. If $y$ is the point corresponding to the ideal $F=M$, then $A_y = M_y = G$, with $G$ the groupification of $M$.

Now suppose that within $F$, $\lambda$ attains a maximum $\lambda = \lambda_0$. Because $F$ is downwards closed, we find using Lemma \ref{lmm:level} that $c^{\lambda_0} \in F$. Because $F$ is upwards directed, it then follows that $F$ is the downwards closure of the elements $m \in F$ with $\lambda(m) = \lambda_0$. This means we can write $F = c^{\lambda_0} \cdot F_0$ for a filter $F_0 \subseteq Y$ containing only elements of level $0$. To simplify the discussion, we restrict our attention to the latter type of filters, so $F=F_0$. For each element $m \in F_0$, there is a unique word $w$ in the letters $a$ and $b$ such that the class of $w$ in $M$ coincides with $m$ (because the equivalence $a^k\sim b^l$ can only be applied if $w$ contains either $a^k$ or $b^l$ as a substring, but then $\lambda(m)\geq 1$).
Just as in Subsection \ref{ssec:examples-free}, we conclude that either
\begin{equation} \label{eq:infinite-word}
F = \{1, x_0, x_0x_1, x_0x_1x_2,\dots \}
\end{equation}
with $x_0x_1x_2\dots$ an infinite word in $\{a,b\}$ that does not contain $a^k$ or $b^l$ as a substring, or 
\begin{equation} \label{eq:finite-word}
F = \{1, x_0,x_0x_1, \dots, x_0x_1\dots x_r \}
\end{equation}
for a finite word $x_0x_1\dots x_r$ in $\{a,b\}$ that, again, does not contain $a^k$ or $b^l$ as a substring. This corresponds to the situation where within $F$ the niveau $\nu$ is unbounded resp.\ bounded.
If $y$ is the point corresponding to $F$, for $F$ as in \eqref{eq:infinite-word}, then we find
\begin{equation*}
A_y = \{ mx_r^{-1}\dots x_0^{-1} : m \in M,~ r \in \NN \}.
\end{equation*}
The computation of $M_y$ is analogous to that in Subsection \ref{ssec:examples-free}: we find that $M_y \cong \ZZ$ whenever $y = x_0x_1x_2\dots$ is an eventually periodic word, and $M_y = \{1\}$ otherwise. If $F$ corresponds to a finite word as in \eqref{eq:finite-word}, then we can write $m=x_0x_1\dots x_r$ and then we can use the formulas \eqref{eq:base-change-Ay} and \eqref{eq:base-change-My} to find $A_y = Mm^{-1}$ and $M_y = mMm^{-1}$. 

Reminder: this is only for the case where $\lambda_0 = 0$; for general $\lambda_0$ we find $F=c^{\lambda_0}F_0$, for $F_0$ as in \eqref{eq:infinite-word} or \eqref{eq:finite-word}. If $y$ and $y_0$ are the points corresponding to $F$ resp.\ $F_0$, then we can use \eqref{eq:base-change-Ay} and \eqref{eq:base-change-My} to conclude $A_y = A_{y_0}c^{-\lambda_0}$ and $M_y = M_{y_0}$.

\section{Subtoposes of monoid type}
\label{sec:subtoposes}

\subsection{A first necessary condition}

Let $M$ be a monoid embedded in a group $M \subseteq G$. Let $X$ be the space constructed earlier, such that
\begin{equation*}
\sh_G(X) \simeq \setswith{M}.
\end{equation*}
We want to determine the subtoposes \emph{of monoid type} of $\setswith{M}$, i.e.\ those subtoposes that are themselves of the form $\setswith{N}$ for some monoid $N$. Like all presheaf toposes, these subtoposes have enough points. So we can use the following:
\begin{proposition} \label{prop:subtopos-equivariant}
Let $X$ be a sober topological space with a left continuous action of a discrete group $G$. Then the subtoposes of $\sh_G(X)$ that have enough points are in bijective correspondence with $G$-invariant sober subspaces $Z \subseteq X$. The subtopos corresponding to $Z \subseteq X$ is $\sh_G(Z)$.
\end{proposition}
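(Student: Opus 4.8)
The plan is to classify these subtoposes by their \emph{points}, using the explicit description of $\pts_G(X)$ from Proposition~\ref{prop:points-equivariant-sheaves}. The starting point is the general fact that a subtopos with enough points is completely determined by the set of points of $\sh_G(X)$ that factor through it. Indeed, if $\mathcal{F} \subseteq \sh_G(X)$ corresponds to a local operator (Lawvere--Tierney topology) $j$, with associated sheafification $a$, then a monomorphism $m$ is $j$-dense iff $a(m)$ is invertible; since $\mathcal{F}$ has enough points and these are exactly the points $p$ of $\sh_G(X)$ factoring through $\mathcal{F}$, this happens iff $p^*(m)$ is invertible for all such $p$. Hence $j$, and therefore $\mathcal{F}$, is recovered from its set of factoring points. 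As $X$ is sober we have $\widehat{X}=X$, so by Proposition~\ref{prop:points-equivariant-sheaves} the points are just the elements of $X$, and sending $\mathcal{F}$ to its point-set gives an \emph{injection} into the subsets $Z \subseteq X$. This $Z$ is automatically $G$-invariant: for $g \in G$ the element $g^{-1}$ defines an isomorphism $x \to gx$ in $\pts_G(X)$ (with inverse $g$), and whether a point factors through $\mathcal{F}$ depends only on its isomorphism class.

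For the reverse direction I would first check that every $G$-invariant sober subspace $Z \subseteq X$ does arise. The $G$-equivariant embedding $Z \hookrightarrow X$ is an embedding of spaces, hence induces a geometric inclusion $\sh_G(Z) \hookrightarrow \sh_G(X)$; since $Z$ is sober, $\sh_G(Z)$ has enough points, and Proposition~\ref{prop:points-equivariant-sheaves} applied to $Z$ shows that its point-set is exactly $Z$. Combined with the injection above, what remains is \emph{surjectivity}: that the point-set $Z$ of an arbitrary enough-points subtopos $\mathcal{F}$ is a sober subspace, and that $\mathcal{F}$ coincides with $\sh_G(Z)$ rather than merely sharing its points.

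To settle this I would pass to the non-equivariant sheaf topos. The canonical map $\sh(X) \to \sh_G(X)$ coming from the atlas $X \to G \ltimes X$ of the action groupoid is an open surjection (as $G$ is discrete), so by descent the subtoposes of $\sh_G(X)$ correspond to the $G$-invariant subtoposes of $\sh(X)$, the invariance being precisely the descent condition along the two projections $G \times X \rightrightarrows X$. On the non-equivariant side one has the classical correspondence between subtoposes of $\sh(X)$ and sublocales of $\mathcal{O}(X)$, under which the subtoposes \emph{with enough points} are exactly the spatial sublocales, i.e.\ those induced by sober subspaces of $X$. Transporting this through the descent correspondence should identify the enough-points subtoposes of $\sh_G(X)$ with the $G$-invariant sober subspaces and exhibit each as $\sh_G(Z)$. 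I expect \emph{the main obstacle} to be exactly this reduction: one must match ``enough points'' correctly across the descent correspondence — equivalently, rule out that the point-set of $\mathcal{F}$ comes from a non-spatial sublocale — which is nontrivial because a subtopos of $\sh(X)$ need not have enough points even though $\sh(X)$ does. The points-theoretic bookkeeping of the first two paragraphs is by comparison routine.
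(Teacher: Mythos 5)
The paper itself offers no proof of this proposition --- it defers to \cite[Proposition 3.27]{thesis-jens} --- so there is nothing in-text to compare against; judged on its own terms, your proposal is sound in outline but stops short at precisely the step you yourself flag as ``the main obstacle.'' You never show that an enough-points subtopos $\mathcal{F} \subseteq \sh_G(X)$ corresponds, under the descent bijection with $G$-invariant sublocales of $X$, to a \emph{spatial} sublocale. This is the crux of the statement: the injectivity on point-sets, the $G$-invariance of the point-set, and the realization of each $G$-invariant sober subspace $Z$ as $\sh_G(Z)$ are all routine once Proposition \ref{prop:points-equivariant-sheaves} is available. The worry is genuine: writing $q : \sh(L) \to \sh_G(L)$ for the atlas surjection, every point of $\sh_G(L)$ factors through $q$, so ``$\sh_G(L)$ has enough points'' only asserts that the points of $L$ are jointly conservative on the image of the (conservative but not essentially surjective) functor $q^*$, which is a priori weaker than $L$ being spatial. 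Leaving this open, your argument does not yet establish that the point-set of $\mathcal{F}$ is a sober subspace, nor that $\mathcal{F}$ equals $\sh_G(Z)$.

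The good news is that your own first paragraph already contains the tool needed to close the gap. Let $\mathcal{F}$ have enough points and correspond to the $G$-invariant sublocale $L$ with point-set $Z = \mathrm{pt}(L) \subseteq X$. Let $L_{sp} \subseteq L$ be the largest spatial sublocale of $L$: it has the same points $Z$, it is $G$-invariant (being canonically generated by the $G$-invariant set $Z$), and it is the sublocale induced by the subspace $Z \subseteq X$, so $\sh_G(L_{sp})$ receives a surjection from $\sh(Z)$ and therefore has enough points. Now $\sh_G(L_{sp})$ and $\mathcal{F} = \sh_G(L)$ are two enough-points subtoposes of $\sh_G(X)$ with the same set of factoring points, so your determined-by-points argument forces them to coincide; the descent bijection then gives $L = L_{sp}$, hence $L$ is spatial, $Z$ (being the space of points of a locale) is sober, and $\mathcal{F} = \sh_G(Z)$. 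With a paragraph to this effect your proof is complete; as written, the central implication is only conjectured.
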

For a proof, see \cite[Proposition 3.27]{thesis-jens}.

When can we write $\sh_G(Z)$ as $\setswith{N}$ for some monoid $N$? If $\sh_G(Z) \simeq \setswith{N}$, then in particular $\pts_G(Z) \simeq \Flat(N)$ by looking at the categories of points. We will identify each flat left $N$-set with its corresponding point in $Z$. Let $x \in Z$ be the point corresponding to the flat left $N$-set $N$. By computing its monoid of endomorphisms in two ways, we find that $N^\op = \END^N(N) = \END^M(A_x) = M_x^\op$. 

After taking opposites, we get
\begin{equation*}
N = M_x = \{ g \in G : gx \geq x \},
\end{equation*}
in particular, $N$ is a submonoid of $G$.

Let $i : \psh(N) \to \psh(M)$ corresponding to the inclusion $\sh_G(Z) \to \sh_G(X)$. The geometric morphism $i$ has a inverse image functor $i^* : \psh(M) \to \psh(N)$ that has a right adjoint $i_*$. The functor $i^*$ is of the form
\begin{equation*}
i^*(X) \simeq X \otimes_M P
\end{equation*}
for some set $P$ with compatible left $M$-action and right $N$-action. The direct image functor is the right adjoint
\begin{equation*}
i_*(Y) \simeq \HOM_N(P,Y),
\end{equation*}
see e.g.\ \cite[Proposition 1.5]{hr1}. Because $i^*$ preserves finite limits, $P$ is flat as left $M$-set. 

The geometric morphism $i$ induces a functor between categories of points
\begin{equation*}
\begin{split}
\Flat(N) \longrightarrow \Flat(M) \\
B \mapsto P \otimes_N B.
\end{split}
\end{equation*}
We already assumed above that this functor sends $N$ to $A_x$. So we can make the identification $P = A_x$. The right $N$-action on $A_x$ then agrees with the right $M_x$-action on $A_x$ as in Subsection \ref{ssec:points-morphisms}, via the earlier identification $N=M_x$.

We have now shown the following:

\begin{lemma} \label{lmm:subtoposes}
Let $M$ be a submonoid of a group $G$, and let $i : \psh(N) \to \psh(M)$ be a geometric embedding, for some monoid $N$. Then, with the notations of Section \ref{sec:points} and up to equivalence, the geometric embedding can be written in the form
\begin{equation} \label{eq:form-of-subtopos}
\begin{tikzcd}
\psh(M_y) \ar[r,bend right,"{\HOM_{M_y}(A_y,-)}"'] & \psh(M) \ar[l,bend right,"{-\otimes_M A_y}"']
\end{tikzcd}
\end{equation}
for some $y \in \hat{X}$.
\end{lemma}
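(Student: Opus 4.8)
The plan is to run the whole argument on the topological side, through the equivalence $\psh(M) \simeq \sh_G(X)$, and to read off the data $(y, A_y, M_y)$ directly from the point of $\psh(N)$ indexed by the regular $N$-set. First I would observe that $\psh(N)$, being a presheaf topos, has enough points, so the subtopos $i \colon \psh(N) \hookrightarrow \psh(M) \simeq \sh_G(X)$ falls under Proposition \ref{prop:subtopos-equivariant}: it corresponds to a $G$-invariant sober subspace $Z \subseteq X$, with $\psh(N) \simeq \sh_G(Z)$. Passing to categories of points turns this into an equivalence $\Flat(N) \simeq \pts_G(Z)$, under which I identify each flat left $N$-set with a point of $Z \subseteq \hat{X}$.

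Next I would locate the base point. Let $y \in Z$ be the point corresponding to the regular flat left $N$-set $N$. Computing its endomorphism monoid on both sides of the equivalence gives $N^\op = \END^N(N) \cong \END^M(A_y) = M_y^\op$, and taking opposites yields $N = M_y = \{ g \in G : gy \geq y \}$ as submonoids of $G$. This already pins down the monoid $N$ appearing in the statement; it remains to recognise the two functors.

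For the functors I would invoke the standard description of geometric morphisms into a presheaf topos: since $i^*$ is cocontinuous and left exact, it has the form $i^*(-) = -\otimes_M P$ for an $(M,N)$-biset $P$, with $P$ flat as a left $M$-set (because $i^*$ preserves finite limits) and with right adjoint $i_*(-) = \HOM_N(P,-)$; see \cite[Proposition 1.5]{hr1}. To identify $P$, I would use that the induced functor on points $\Flat(N) \to \Flat(M)$, $B \mapsto P \otimes_N B$, must send the regular $N$-set $N$ to the flat $M$-set $A_y$ attached to $y$; since $P \otimes_N N \cong P$, this forces $P \cong A_y$. Substituting $N = M_y$ and $P = A_y$ then reproduces the diagram \eqref{eq:form-of-subtopos}.

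The step I expect to be the main obstacle is not any of the individual identifications but the bookkeeping that makes them \emph{simultaneously} compatible: I must check that $P \cong A_y$ holds not merely as left $M$-sets but as $(M, M_y)$-bisets, i.e.\ that the right $N = M_y$-action transported onto $A_y$ coincides with the right $M_y$-action described in Subsection \ref{ssec:points-morphisms}. The cleanest way I see to secure this is to track the regular point $y$ through the equivalence and use naturality of the isomorphism $\END^N(N) \cong \END^M(A_y)$, so that the identification of endomorphism monoids and the identification of underlying bisets are both induced by one and the same equivalence of point categories, rather than matched up by hand.
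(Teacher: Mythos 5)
Your proposal is correct and follows essentially the same route as the paper: apply Proposition \ref{prop:subtopos-equivariant} to realise the subtopos as $\sh_G(Z)$, identify $N=M_y$ by computing the endomorphism monoid of the regular point in two ways, and then pin down the biset $P$ as $A_y$ by evaluating the induced functor on points at the regular $N$-set. The compatibility of the right $M_y$-action that you flag as the main obstacle is handled in the paper exactly as you suggest, by noting that both identifications are induced by the same equivalence of point categories.
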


\begin{remark} \label{rem:equivalent-subtoposes}
For $y \in \hat{X}$ and $h \in G$, suppose that we have two geometric morphisms $f$ and $g$ of the following form:
\begin{equation*}
\begin{tikzcd}
\psh(M_y) \ar[r,bend right,"{\HOM_{M_y}(A_y,-)}"'] & \psh(M) \ar[l,bend right,"{-\otimes_M A_y}"']
\end{tikzcd}
\end{equation*}
\begin{equation*}
\begin{tikzcd}
\psh(M_{hy}) \ar[r,bend right,"{\HOM_{M_{hy}}(A_{hy},-)}"'] & \psh(M) \ar[l,bend right,"{-\otimes_M A_{hy}}"']
\end{tikzcd}
\end{equation*}
The function $M_y \to M_{hy},~ m \mapsto hmh^{-1}$ is an isomorphism of monoids. If we consider $A_{hy}$ as right $M_{hy}$-set, and then look at the induced right $M$-action via $a \cdot m = a \phi(m)$, then the bijection $A_y \to A_{hy},~ a \mapsto ah^{-1}$ is equivariant for both the left $M$-action and the right $M_y$-action.

From the above discussion, we find a diagram
\begin{equation*}
\begin{tikzcd}[row sep=small]
\psh(M_y) \ar[dd,"{U}"'] \ar[near start,rrd,"{f}"] & \\
& & \psh(M)  \\
\psh(M_{hy}) \ar[near start,rru,"{g}"']  & 
\end{tikzcd}
\end{equation*}
commuting up to natural isomorphism, with $U$ an equivalence. So if $-\otimes_M A_y$ defines a subtopos of the form (\ref{eq:form-of-subtopos}), then $- \otimes_M A_{hy}$ defines the same subtopos.
\end{remark}

\begin{example}
Let $M$ be the free monoid on $k$ variables. We know from Subsection \ref{ssec:examples-free} that there are three possibilities for $M_y$:
\begin{itemize}
\item if $y = a_0a_1\dots a_r$ is a finite word, then $M_y \cong M$;
\item if $y = a_0a_1a_2\dots$ is eventually periodic, then $M_y \cong \ZZ$;
\item if $y = a_0a_1a_2\dots$ is not eventually periodic, then $M_y \cong \{1\}$. 
\end{itemize}
From Lemma \ref{lmm:subtoposes}, it now follows that if $\psh(N) \to \psh(M)$ is a geometric embedding for some monoid $N$, then $N$ is isomorphic to $M$, $\ZZ$ or the trivial monoid. We will later exclude the latter two possibilities, meaning that $\psh(M)$ does not have any subtoposes of monoid type other than $\psh(M) \subseteq \psh(M)$ itself.
\end{example}

\subsection{Taking it a step further}

Now consider the right $M$-set $G$ (under multiplication). We show that it is preserved under inverse image.

\begin{lemma} \label{lmm:G-tensor-A_y}
With the notations of Section \ref{sec:points}, consider a geometric embedding of the form
\begin{equation*}
\begin{tikzcd}
\psh(M_y) \ar[r,bend right,"{\HOM_{M_y}(A_y,-)}"'] & \psh(M) \ar[l,bend right,"{-\otimes_M A_y}"']
\end{tikzcd}
\end{equation*}
for some $y \in \hat{X}$. Then the map $G \otimes_M A_y \to G,~ g\otimes a \mapsto ga$ is bijective.
\end{lemma}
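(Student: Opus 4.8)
The plan is to prove directly that the comparison map $\phi : G \otimes_M A_y \to G$, $g \otimes a \mapsto ga$, is both surjective and injective. First, $\phi$ is well-defined: for every $m \in M$ the elements $gm \otimes a$ and $g \otimes ma$ have the common image $gma$ by associativity in $G$, so $\phi$ respects the generating relations of the tensor product. Surjectivity is then immediate. Recall that $A_y$ is the flat left $M$-set attached to the point $y$, so by flatness property (F1) it is nonempty; fixing any $a_0 \in A_y$, we have $\phi(h a_0^{-1} \otimes a_0) = h$ for every $h \in G$, where $h a_0^{-1} \in G$ because $G$ is a group.

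The real content is injectivity, and the idea is to show that each fibre of $\phi$ collapses to a single element of the tensor product. Suppose $ga = g'a' =: h$ with $g,g' \in G$ and $a,a' \in A_y$. Then $g = ha^{-1}$ and $g' = h(a')^{-1}$, so it suffices to prove $ha^{-1}\otimes a = h(a')^{-1}\otimes a'$. Here I would invoke flatness property (F2) for the left $M$-set $A_y$: there exist $c \in A_y$ and $m,n \in M$ with $a = mc$ and $a' = nc$. The key computation is then
\begin{equation*}
ha^{-1}\otimes a = hc^{-1}m^{-1} \otimes mc = (hc^{-1}m^{-1})m \otimes c = hc^{-1}\otimes c,
\end{equation*}
where the first equality uses $a^{-1} = c^{-1}m^{-1}$, the second is the defining tensor relation $(xm,b)\sim(x,mb)$ applied with $x = hc^{-1}m^{-1} \in G$, the element $m \in M$, and $b = c \in A_y$, and the third is group arithmetic. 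The identical computation gives $h(a')^{-1}\otimes a' = hc^{-1}\otimes c$, so the two elements agree. Together with surjectivity this shows $\phi$ is a bijection.

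The one delicate point, and the only genuine obstacle, is that inverses such as $a^{-1}$ and $m^{-1}$ lie in $G$ but \emph{not} in $M$, so they may not be transported across the symbol $\otimes$ by the tensor relation; only honest elements of $M$ may be moved. The computation above is arranged precisely so that the element crossing $\otimes$ is the $M$-element $m$ furnished by (F2), while all inverses are absorbed harmlessly into the left-hand factor, which legitimately ranges over all of $G$. This is exactly where flatness of $A_y$ is needed: without the ``common lower bound'' condition (F2), distinct $a,a'$ with $ga = g'a'$ need not become identified, and $\phi$ could fail to be injective.
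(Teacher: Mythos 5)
Your proof is correct, but it takes a genuinely different route from the paper's. The paper exploits the geometric-embedding hypothesis: the counit $\HOM_{M_y}(A_y,G)\otimes_M A_y \to G$ is an isomorphism because the morphism is an embedding, and injectivity of the restricted map $G\otimes_M A_y \to G$ follows because the inverse image functor $-\otimes_M A_y$ preserves monomorphisms; surjectivity is then immediate. You instead work directly with the defining relations of the tensor product: given $ga=g'a'=h$, you use the directedness axiom (F2) to find a common ``denominator'' $c\in A_y$ with $a=mc$, $a'=nc$, and then a single legal application of the relation $(xm,b)\sim(x,mb)$ collapses both classes to $hc^{-1}\otimes c$. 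Your handling of the delicate point is right: only the element $m\in M$ ever crosses the tensor sign, while the inverses $a^{-1},c^{-1}$, which live in $G$ but not in $M$, are absorbed into the left factor, which is the right $M$-set $G$ and so may contain arbitrary group elements. What your argument buys is generality and economy: it uses only (F1), (F2) and the concrete realization of $A_y$ as a sub-$M$-set of $G$ from Section \ref{sec:points}, so it establishes the bijection $G\otimes_M A_y\cong G$ for \emph{every} $y\in\hat{X}$, with no geometric-embedding hypothesis at all. What the paper's argument buys is abstraction: it never looks inside $A_y$ and would apply verbatim to any geometric embedding $\psh(N)\to\psh(M)$ with inverse image $-\otimes_M P$, whatever the bimodule $P$ looks like. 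In the context where the lemma is applied (Lemma \ref{lmm:My-Ay} and Theorem \ref{thm:subtoposes-of-monoid-type}), both proofs serve equally well.
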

\begin{proof}
Because we are working with a geometric embedding, the counit
\begin{equation} \label{eq:G-embedding}
\begin{split}
\HOM_{M_y}(A_y,G) \otimes_M A_y \longrightarrow G \\
\phi \otimes a \mapsto \phi(a)
\end{split}
\end{equation}
is an isomorphism. Now consider the embedding $G \hookrightarrow \HOM_{M_y}(A_y,G)$ sending $g$ to the homomorphism $a \mapsto ga$. Tensoring with $A_y$ preserves monomorphisms, so the restriction
\begin{equation*}
\begin{split}
G \otimes_M A_y \longrightarrow G \\
g \otimes a \mapsto ga
\end{split}
\end{equation*}
of \eqref{eq:G-embedding} is still injective. Because $1 \otimes a$ is mapped to $a$, we see that the map is surjective as well. 
\end{proof}

\begin{lemma} \label{lmm:My-Ay}
With the notations of Section \ref{sec:points}, consider a geometric embedding of the form
\begin{equation*}
\begin{tikzcd}
\psh(M_y) \ar[r,bend right,"{\HOM_{M_y}(A_y,-)}"'] & \psh(M) \ar[l,bend right,"{-\otimes_M A_y}"']
\end{tikzcd}
\end{equation*}
for some $y \in \hat{X}$. Then there is a $g \in G$ such that $M_y = gA_y$.
\end{lemma}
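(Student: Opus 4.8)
The plan is to convert the set-theoretic identity $M_y = gA_y$ into an order-theoretic minimality statement inside $\hat{X}$, and then to extract the minimizer from the embedding hypothesis.

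First I would normalize the point $y$. By the discussion on ideals containing $[1]$, together with the base-change formulas \eqref{eq:base-change-Ay} and \eqref{eq:base-change-My}, it suffices to prove the lemma when the ideal $y$ contains $[1]$, i.e.\ $y \geq [1]$: indeed, if $M_y = gA_y$, then using $M_{hy} = hM_yh^{-1}$ and $A_{hy} = A_yh^{-1}$ we get $M_{hy} = hgA_yh^{-1} = hg(A_yh^{-1}) = (hg)A_{hy}$, so the statement transports along the $G$-orbit. With $y \geq [1]$ we have $[1] \in A_y$ (since $1\cdot y = y \geq [1]$), $1 \in M_y$, and moreover $M_y \subseteq A_y$ because $my \geq y \geq [1]$ for $m \in M_y$. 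Next observe that any $g$ with $M_y = gA_y$ must satisfy $1 = ga_0$ for some $a_0 \in A_y$, hence $g = a_0^{-1}$ with $a_0 \in A_y$; and conversely $M_y = a_0^{-1}A_y$ is equivalent to $A_y = a_0 M_y$. So the lemma becomes: \emph{there is $a_0 \in A_y$ with $A_y = a_0 M_y$.}

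One inclusion is automatic. The $G$-action on $\hat{X}$ is by order-automorphisms (it is the action $g\cdot F = \{gx : x \in F\}$ on ideals, which preserves inclusion), so for any $a_0 \in A_y$ and $m \in M_y$ we have $(a_0m)y = a_0(my) \geq a_0 y \geq [1]$, giving $a_0 M_y \subseteq A_y$ for free. Thus everything reduces to the reverse inclusion, which unwinds to $a_0^{-1}a \in M_y$ for all $a \in A_y$, i.e.\ $ay \geq a_0 y$ for all $a \in A_y$. Equivalently, the set
\[
\{\,ay : a \in A_y\,\} = \{\,z \in Gy : z \geq [1]\,\}
\]
must possess a least element, attained at $a_0$. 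This is the heart of the matter, and it is exactly where mere flatness is insufficient: the flatness criterion (F2) only provides common lower bounds (downward directedness of $A_y$ under left-divisibility), not a global minimum. For instance, for the free monoid with $y$ an eventually periodic infinite word, the candidate minimizers are the various shifts of $y$, which are pairwise incomparable, so no least element exists --- precisely the case that will be shown not to give an embedding.

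Finally I would produce the minimizer from the embedding. Since $i_*$ is fully faithful the counit is an isomorphism, and evaluating it at the representable right $M_y$-set $M_y$ gives
\[
\HOM_{M_y}(A_y, M_y) \otimes_M A_y \;\xrightarrow{\ \sim\ }\; M_y, \qquad \phi \otimes a \mapsto \phi(a).
\]
Pulling back $1 \in M_y$ yields $\phi \in \HOM_{M_y}(A_y,M_y)$ and $a_0 \in A_y$ with $\phi(a_0) = 1$, and this $a_0$ is my candidate generator. \textbf{The main obstacle} is to promote this to $A_y = a_0 M_y$, equivalently to show $a_0 y = \min\{ay : a \in A_y\}$, i.e.\ that $A_y$ is a free rank-one right $M_y$-set. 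The difficulty is that the tensor product can collapse, so $\phi(a) = \phi(a_0 m)$ does not by itself force $a = a_0 m$; here one must feed in the \emph{injectivity} of the counit (the genuinely ``embedding'' input, going beyond flatness and beyond Lemma \ref{lmm:G-tensor-A_y}) to exclude the collapse. An alternative route I would keep in reserve uses Proposition \ref{prop:subtopos-equivariant}: the subtopos is $\sh_G(Z)$ for a $G$-invariant sober $Z \subseteq \hat{X}$ containing $y$, and re-running the construction of Section \ref{sec:points} for the submonoid $M_y \subseteq G$ should identify $y$ as the base point whose associated flat $M_y$-set is $M_y$ itself, forcing $y$ to be the sought minimal point of $Z \cap \{\,\cdot \geq [1]\,\}$. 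The obstacle on that route is to match the two $G$-space models ($Z$ versus $G/M_y^\times$) carefully enough to transport the minimality.
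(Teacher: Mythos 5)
Your setup is correct and matches the paper's framing up to a point: reducing to $y \geq [1]$, observing that any $g$ with $M_y = gA_y$ must be $a_0^{-1}$ for some $a_0 \in A_y$, noting that the inclusion $a_0 M_y \subseteq A_y$ is automatic, and identifying the counit isomorphism $\HOM_{M_y}(A_y,M_y)\otimes_M A_y \xrightarrow{\sim} M_y$ as the essential input. But the proof is not complete: you produce a pair $(\phi, a_0)$ with $\phi(a_0)=1$ and then explicitly defer the key step (``the main obstacle is to promote this to $A_y = a_0M_y$''). The element $\phi$ is an \emph{abstract} morphism of right $M_y$-sets $A_y \to M_y$, and nothing in your argument forces it to be left multiplication by a group element; without that, knowing $\phi(a_0)=1$ tells you only that $\phi$ splits off a copy of $M_y$ inside $A_y$, not that $a_0^{-1}A_y \subseteq M_y$. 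Appealing vaguely to ``injectivity of the counit to exclude the collapse'' is not an argument, and your fallback route via Proposition \ref{prop:subtopos-equivariant} is likewise left as a sketch with an acknowledged unresolved matching problem.

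The paper closes exactly this gap with an intersection trick that your proposal is missing. One views $G \subseteq \HOM_{M_y}(A_y,G)$ (via $g \mapsto (a \mapsto ga)$) and $\HOM_{M_y}(A_y,M_y) \subseteq \HOM_{M_y}(A_y,G)$, and sets $S = \HOM_{M_y}(A_y,M_y) \cap G$: this is precisely the set of group elements $g$ with $gA_y \subseteq M_y$. Since $A_y$ is flat, $-\otimes_M A_y$ preserves intersections, so combining the counit isomorphism with Lemma \ref{lmm:G-tensor-A_y} gives $S \otimes_M A_y = M_y \cap G = M_y$. In other words, the abstract morphisms $\phi$ can be discarded: the group elements in $\HOM_{M_y}(A_y,M_y)$ already suffice to cover $M_y$ after tensoring. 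Decomposing the right $M$-set $S = \bigcup_{i} g_i M$ and tensoring yields $M_y = \bigcup_i g_i A_y$; choosing $i_0$ with $1 \in g_{i_0}A_y$ and noting that $g_{i_0}A_y$ is a right $M_y$-subset of $M_y$ containing $1$ forces $g_{i_0}A_y = M_y$. This is the step your proposal needs and does not supply; your order-theoretic reformulation (that $\{ay : a \in A_y\}$ must have a least element) is a correct and even illuminating restatement of the goal, but restating the goal is not proving it.
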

\begin{proof}
The inclusion $M_y \subseteq G$ induces an inclusion $\HOM_{M_y}(A_y,M_y) \subseteq \HOM_{M_y}(A_y,G)$. We can also view $G$ as a subset of $\HOM_{M_y}(A_y,G)$ by sending $g$ to the map $a \mapsto ga$. 
Because we are working with a geometric embedding, we can make the identification $\HOM_{M_y}(A_y,G)\otimes_{M} A_y = G$. Then we also get $\HOM_{M_y}(A_y,M_y) \otimes_M A_y = M_y$. From Lemma \ref{lmm:G-tensor-A_y}, we find $G \otimes_M A_y = G$.

We now define $S$ as the intersection $S = \HOM_{M_y}(A_y,M_y) \cap G$. Tensoring with $A_y$ preserves intersections, so $S \otimes_M A_y = M_y$.

After writing $S = \bigcup_{i \in I} g_i M$ as right $M$-sets and tensoring with $A_y$, we find
\begin{equation*}
M_y = \bigcup_{i \in I} g_iA_y.
\end{equation*}
Take $i_0 \in I$ such that $1 \in g_{i_0}A_y$. Because $1$ generates $M_y$ as right $M_y$-set, we see that $M_y = g_{i_0}A_y$, which is what we wanted to prove.
\end{proof}

\begin{theorem} \label{thm:subtoposes-of-monoid-type}
Let $M$ be a submonoid of a group $G$. Suppose there is an element $y \in \hat{X}$ such that $M_y = A_y$.  Then $y \in \hat{Y}$ and there is a subtopos of monoid type $i : \psh(M_y) \to \psh(M)$ with inverse image functor given by $i^*(X) \simeq X \otimes_M M_y$. Conversely, every subtopos of monoid type of $\psh(M)$ is of this form for a unique $y \in \hat{Y}$ such that $M_y=A_y$. 
\end{theorem}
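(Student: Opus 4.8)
The plan is to treat the two implications separately, using the dictionary of Section \ref{sec:points} throughout, and to normalize everything so that the relevant point lies above $[1]$.

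For the forward direction, suppose $M_y = A_y$. Since $1 \in M_y$ always, the hypothesis gives $1 \in A_y$, i.e.\ $1 \cdot y \geq [1]$, so $y \geq [1]$ and hence $y \in \hat{Y}$. Next I would record that $M \subseteq A_y$: for $m \in M$ one has $[m^{-1}] \leq [1]$, and since $y$ is downward closed with $[1] \in y$ this forces $m \in A_y$. Thus the inclusion $M \hookrightarrow M_y = A_y$ makes sense. The flat left $M$-set $A_y = M_y$ carries the regular right $M_y$-action, and this produces a geometric morphism $i \colon \psh(M_y) \to \psh(M)$ with $i^* = -\otimes_M M_y$ (which preserves finite limits by flatness) and $i_* = \HOM_{M_y}(M_y,-)$; because $M_y$ is the free right $M_y$-set on the generator $1$, the latter is naturally the restriction functor along $M \hookrightarrow M_y$. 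So $i$ is an embedding exactly when the counit $i^* i_* \to \mathrm{id}$ is invertible, and since $i^*i_*$ and the identity both preserve colimits and every right $M_y$-set is a colimit of copies of $M_y$, it suffices to check this at the representable, i.e.\ to show the multiplication map $\mu \colon M_y \otimes_M M_y \to M_y$ is bijective. Surjectivity is clear; for injectivity I would take $g \otimes h$, $g' \otimes h'$ with $gh = g'h'$, apply criterion (F2) to $h,h' \in A_y$ to get $c \in A_y$ and $m,n \in M$ with $h = mc$, $h' = nc$, rewrite $g\otimes h = gm \otimes c$ and $g'\otimes h' = g'n\otimes c$, and then cancel the invertible element $c$ in $G$ from $gmc = g'nc$ to obtain $gm = g'n$, whence $g\otimes h = g'\otimes h'$.

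For the converse, let $\mathcal S \subseteq \psh(M)$ be a subtopos of monoid type. By Lemma \ref{lmm:subtoposes} it has the form \eqref{eq:form-of-subtopos} for some $y_0 \in \hat{X}$, and by Lemma \ref{lmm:My-Ay} there is $g \in G$ with $M_{y_0} = g A_{y_0}$. Setting $y = g^{-1}y_0$ and using the base-change formulas \eqref{eq:base-change-Ay} and \eqref{eq:base-change-My} gives $M_y = g^{-1}M_{y_0}g$ and $A_y = A_{y_0}g$, so $M_y = A_y$ is equivalent to $M_{y_0} = gA_{y_0}$, which holds. By Remark \ref{rem:equivalent-subtoposes} the point $y$ defines the same subtopos $\mathcal S$, and by the forward direction $y \in \hat{Y}$; thus $\mathcal S$ is of the claimed form.

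For uniqueness, Proposition \ref{prop:subtopos-equivariant} shows $\mathcal S$ determines the $G$-invariant sober subspace $Z \subseteq \hat{X}$ of its points, with $y \in Z \cap \hat{Y}$. I claim $y = \min(Z \cap \hat{Y})$, which pins it down. Let $x \in Z$ with $x \geq [1]$. As a point of $\mathcal S = \psh(M_y)$, $x$ corresponds to a flat left $M_y$-set $B$ with $A_x \cong A_y \otimes_{M_y} B = B|_M$ as left $M$-sets; fixing such an isomorphism $\theta$ and putting $b_0 = \theta^{-1}(1)$ (using $1 \in A_x$), the $M_y$-orbit map $M_y \to B$, $n \mapsto nb_0$, composed with $\theta$ yields a left $M$-set morphism $\psi \colon A_y = M_y \to A_x$ with $\psi(1) = 1$. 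By the classification in Subsection \ref{ssec:points-morphisms}, $\psi$ equals $a \mapsto ag$ for a unique $g$ with $y \leq gx$, and evaluating at $1$ forces $g = 1$, hence $y \leq x$. So $y$ is a lower bound of $Z \cap \hat{Y}$ lying in it, i.e.\ its minimum; the same argument applied to any second such point shows it is the minimum too, so the two coincide.

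The main obstacle is the uniqueness statement: its real content is the inequality $y \leq x$ for points of the subtopos above $[1]$, and the non-obvious step is to manufacture the basepoint-preserving morphism $A_y \to A_x$ from the $M_y$-action on the extension $B$ and then read off $g = 1$ via the morphism classification. The bijectivity of $\mu$ in the forward direction is the other technical point, but there the interplay of flatness (F2) with cancellation in the ambient group $G$ should make it routine.
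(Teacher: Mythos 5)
Your proof is correct. The forward direction and the existence half of the converse track the paper's argument closely: the same deduction $y\geq[1]$ from $1\in M_y=A_y$, the same reduction of the counit $i^*i_*\to 1$ to the representable $M_y$, and the same normalization $y_0\mapsto g^{-1}y_0$ via Lemmas \ref{lmm:subtoposes} and \ref{lmm:My-Ay}, the base-change formulas \eqref{eq:base-change-Ay}--\eqref{eq:base-change-My}, and Remark \ref{rem:equivalent-subtoposes}. Two steps genuinely differ. For injectivity of $\mu\colon M_y\otimes_M M_y\to M_y$ you argue directly from (F2) plus cancellation in $G$; the paper instead applies $i^*$ to the monomorphism $M_y\subseteq G$ and cites Lemma \ref{lmm:G-tensor-A_y} to identify $G\otimes_M M_y$ with $G$. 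Your version is self-contained (it essentially reproves that lemma in the case at hand), which is arguably preferable since Lemma \ref{lmm:G-tensor-A_y} is stated under the hypothesis that an embedding is already given. More substantially, your uniqueness argument takes a different route: the paper only shows that within a single $G$-orbit there is at most one $y$ with $M_y=A_y$ (from $gM_y=M_y$ one gets $g,g^{-1}\in M_y$, hence $gy=y$), whereas you characterize $y$ intrinsically as $\min(Z\cap\hat Y)$ for $Z$ the $G$-invariant sober subspace of Proposition \ref{prop:subtopos-equivariant}, by transporting the basepoint $1\in A_x$ through the equivalence of categories of points to obtain a morphism $A_y\to A_x$ fixing $1$ and reading off $g=1$ from the classification of morphisms in Subsection \ref{ssec:points-morphisms}. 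What this buys is that uniqueness depends only on the subtopos itself (through $Z$) and not on the chosen presentation in the form \eqref{eq:form-of-subtopos}, so you never need to know that two normalized presentations of the same subtopos land in the same $G$-orbit; the cost is the extra bookkeeping with the point-equivalence, which you carry out correctly.
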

\begin{proof}
Suppose that $M_y = A_y$. Then $M \subseteq M_y=A_y$, because $M_y=A_y$ contains $1$ and is closed under the left $M$-action by multiplication. From $M \subseteq A_y$ it follows that $y \geq [1]$.

Moreover, since $M_y$ is flat as a left $M$-set, the functor $i^*$ with $i^*(X) = X \otimes_M M_y$ preserves colimits and finite limits, so it is the inverse image functor of a geometric morphism $i : \psh(M_y) \to \psh(M)$. We claim that $i$ is a geometric embedding. First, note that the direct image functor $i_*$ is given by $i_*(Y) \simeq \Hom_{M_y}(M_y,Y) \simeq Y$. More precisely, the direct image functor $i_*$ is the forgetful functor that sends a right $M_y$-set to the same set with the induced right $M$-action. In particular, $i_*$ preserves colimits. To prove that $i$ is a geometric embedding, we want to prove that the counit $\epsilon : i^*i_* \to 1$ is a natural isomorphism. Because $i^*i_*$ preserves colimits, it is enough to show that $\epsilon$ is an isomorphism at the generator $M_y$, or in other words that 
\begin{equation*}
\begin{tikzcd}[row sep=tiny]
M_y \otimes_M M_y \ar[r,"{\epsilon_{M_y}}"] & M_y \\
g \otimes h \ar[r,mapsto] & gh
\end{tikzcd}
\end{equation*}
is an isomorphism. Surjectivity is clear. To prove injectivity, consider the inclusion $M_y \subseteq G$. Applying $i^*$ gives an injective map $j : M_y \otimes_M M_y \to G \otimes_M M_y \simeq G$ where the latter isomorphism follows from Lemma \ref{lmm:G-tensor-A_y}. The injective map $j$ factors as $\epsilon_{M_y}$ followed by the inclusion $M_y \subseteq G$, so $\epsilon_{M_y}$ is injective as well. It follows that $i$ is a geometric embedding.

Conversely, suppose that a subtopos of monoid type of $\psh(M)$ is given. By Lemma \ref{lmm:subtoposes} this subtopos is of the form $i : \psh(M_y) \to \psh(M)$ with $i^*(X) \simeq -\otimes_M A_y$, for some $y \in \hat{X}$. By Lemma \ref{lmm:My-Ay}, we can moreover find some $g \in G$ such that $M_y = gA_y$. We then get $M_{g^{-1}y} = g^{-1}M_y g = A_yg = A_{g^{-1}y}$ using the base change formulas \eqref{eq:base-change-Ay} and \eqref{eq:base-change-My}. By replacing $y$ with $g^{-1}y$, see Remark \ref{rem:equivalent-subtoposes}, we can assume $M_y = A_y$. 

In each $G$-orbit, there can be only one $y \in \hat{X}$ such that $M_y = A_y$. Indeed, suppose that $M_y = A_y$ and $M_{gy} = A_{gy}$. Using equations (\ref{eq:base-change-Ay}) and (\ref{eq:base-change-My}) the latter equation reduces to $gM_y = A_y$. Because $gM_y = M_y$ we get that $g \in M_y$ and $g^{-1} \in M_y$, or in other words $gy \geq y$ and $g^{-1}y \geq y$. The second inequality is equivalent to $y \geq gy$, so we get $gy = y$.
\end{proof}

\begin{corollary}
Let $M$ be a submonoid of a group $G$. Then there is a bijective correspondence between subtoposes of $\psh(M)$ of monoid type (up to equivalence) and monoids $N$ with $M \subseteq N \subseteq G$ that are flat as left $M$-set.
\end{corollary}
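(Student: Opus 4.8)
The plan is to establish the bijection by producing inverse constructions in each direction, leveraging Theorem \ref{thm:subtoposes-of-monoid-type} as the backbone. Starting from a subtopos of monoid type, that theorem already hands us a unique $y \in \hat{Y}$ with $M_y = A_y$, and tells us the subtopos is $\psh(M_y) \hookrightarrow \psh(M)$. So the associated monoid should be $N = M_y$. First I would verify that $N = M_y$ satisfies $M \subseteq N \subseteq G$ and is flat as a left $M$-set: the containment $M \subseteq M_y$ was shown inside the proof of Theorem \ref{thm:subtoposes-of-monoid-type} (since $M_y = A_y \ni 1$ is closed under left multiplication by $M$), the containment $M_y \subseteq G$ is by definition of $M_y$, and flatness is automatic because $M_y = A_y$ is a flat left $M$-set (being the left $M$-set attached to a topos point).

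Conversely, starting from a monoid $N$ with $M \subseteq N \subseteq G$ that is flat as a left $M$-set, I would construct a subtopos. The key observation is that $N$, viewed as a left $M$-set, is flat, hence corresponds to a point of $\psh(M)$, i.e.\ to some $y \in \hat{X}$ with $A_y \cong N$ as left $M$-sets. Because $1 \in N$ generates $N$ as a right $N$-set and $M \subseteq N$, we get $M \subseteq A_y$, forcing $y \geq [1]$, so in fact $y \in \hat{Y}$. I then need to identify $N$ with $M_y$ and check $M_y = A_y$. The natural candidate is the point $y$ whose associated flat $M$-set $A_y$ is $N$ with its left $M$-action; the right $N$-action on $N$ by multiplication should match the right $M_y$-action on $A_y$, giving $N = M_y$ and simultaneously $M_y = A_y = N$. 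Once $M_y = A_y$ is in hand, Theorem \ref{thm:subtoposes-of-monoid-type} produces the subtopos $\psh(M_y) = \psh(N) \hookrightarrow \psh(M)$ directly.

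To make the two constructions mutually inverse, I would check that these assignments respect the equivalence relations on both sides: subtoposes are considered up to equivalence, and monoids $N$ are naturally considered as \emph{subsets} of $G$ (so genuinely equal, not merely isomorphic), which removes the ambiguity coming from Remark \ref{rem:equivalent-subtoposes}. The uniqueness clause of Theorem \ref{thm:subtoposes-of-monoid-type}, namely that there is a unique $y \in \hat{Y}$ with $M_y = A_y$ in each relevant situation, is exactly what guarantees that the map from subtoposes to submonoids is well-defined and injective, while the construction above shows surjectivity.

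The main obstacle I anticipate is the identification step in the converse direction: pinning down that the monoid $M_y = \{g \in G : gy \geq y\}$ really equals the given $N$ as a \emph{subset of $G$}, and not merely abstractly isomorphic to it. The delicate point is that $A_y$ is defined only up to isomorphism of left $M$-sets, so I must track the right-action data carefully to see that the endomorphism monoid $\END^M(A_y)^\op$, computed via the identification $A_y = N$, recovers precisely the multiplication-by-$N$ action and hence $M_y = N$ inside $G$. Concretely, an $M$-linear endomorphism of $N$ is determined by the image of $1$, and $M$-linearity plus the requirement that it be an endomorphism of the \emph{flat} $M$-set forces this image to lie in $N$ and to act by right multiplication; establishing this cleanly is where the real work lies, but it is precisely the content already encoded in Subsection \ref{ssec:points-morphisms}, so I expect it to go through once the bookkeeping is set up.
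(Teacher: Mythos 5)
Your proposal follows essentially the same route as the paper: both directions go through Theorem \ref{thm:subtoposes-of-monoid-type}, with the forward map $N = M_y = A_y$ and the converse realizing a given $N$ as $A_y$ for some point $y$, then identifying $N$ with $M_y$ as subsets of $G$. The one place where you leave real work undone is exactly the step you flag: showing $M_y = A_y = N$ \emph{as subsets of $G$}. The paper closes this more directly than your sketch suggests: the inclusion $M_y \subseteq A_y$ is immediate from $y \geq [1]$ (if $gy \geq y \geq [1]$ then $g \in A_y$), and the reverse inclusion $A_y \subseteq M_y$ follows from the description $M_y = \{ g \in G : \forall a \in A_y,\ ag \in A_y\}$ in Subsection \ref{ssec:points-morphisms} together with the fact that $N=A_y$ is multiplicatively closed; no delicate tracking of the right-action data is needed. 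One micro-claim in your final paragraph is false as stated: an $M$-linear endomorphism of $N$ is \emph{not} in general determined by the image of $1$, since $1$ need not generate $N$ as a left $M$-set (e.g.\ $N=G$ the groupification, when it is flat). What saves the argument is that Subsection \ref{ssec:points-morphisms} already identifies \emph{every} $M$-linear endomorphism of $A_y$ with right multiplication by a unique element of $G$, so you should lean on that rather than on a generation-by-$1$ argument.
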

\begin{proof}
By Theorem \ref{thm:subtoposes-of-monoid-type} the subtoposes of monoid type are of the form $\psh(M_y) \to \psh(M)$ where $y \in \hat{Y}$ is an element such that $A_y = M_y$. Because $y \in \hat{Y}$, we find that $M_y=A_y$ contains $M$. So by taking $N = A_y = M_y$ we see that each subtopos gives a monoid $N$ as in the statement. 

Conversely, suppose that $N$ is a monoid with $M \subseteq N \subseteq G$ such that $N$ is flat as left $N$-set. Then $N = A_y$ for some $y \in \hat{X}$. Because $M \subseteq N$, we find $y \in \hat{Y}$. We claim that $M_y = A_y = N$. It follows from $y \geq 1$ that $M_y \subseteq A_y$. We can write $M_y$ as $M_y = \{ g \in G : \forall a \in A_y,~ ag \in A_y \}$. Since $A_y = N$ is multiplicatively closed, we see that we also have the converse inclusion $A_y \subseteq M_y$. We conclude that $N = M_y = A_y$. 
\end{proof}

\begin{example} \label{eg:subtoposes-free-monoid}
Let $M = \langle{a_1,\dots,a_k}\rangle$ be the free monoid on $k$ variables. From Subsection \ref{ssec:examples-free} we know that $M_y$ is isomorphic to $M$, $\ZZ$ or the trivial monoid. If $M_y = A_y$, then we have $M \subseteq M_y$ which is only possible if $M_y \cong M$. So $y$ is a finite word, and we see that the equality $M_y = A_y$ is satisfied if and only if $y = 1$. This corresponds to the full subtopos $\psh(M) \subseteq \psh(M)$, and there are no other subtoposes of monoid type.
\end{example}

\subsection{Connection to prime ideals}

Take $M$, $G$ and $X$ as before. We know that any subtopos of monoid type can be written in the form
\begin{equation*}
\begin{tikzcd}
\psh(M_y) \ar[r,bend right,"{\HOM_{M_y}(M_y,-)}"'] & \psh(M) \ar[l,bend right,"{-\otimes_M M_y}"']
\end{tikzcd}
\end{equation*}
for a certain $y \in \hat{X}$ such that $M_y = A_y$. We claim that here $M_y$ is the localization of $M$ at some prime ideal $\fp$.

For $N$ a submonoid of a group $G$, any left invertible element of $N$ is also right invertible, and vice versa. The subset of invertible elements is $$N^\times = \{ n \in N : n^{-1} \in N \},$$ with the inverse taken in $G$. We can then check directly that $N-N^\times$ is a prime ideal of $N$.

Taking $N=M_y$, we find the prime ideal $M_y - M_y^\times$ of $M_y$. We now define $\fp_y$ to be the inverse image of this prime ideal along the inclusion $M \subseteq M_y$, i.e.\ 
\begin{equation*}
\fp_y = (M_y-M_y^\times)\cap M.
\end{equation*}

\begin{proposition} \label{prop:My-is-Ay-then-prime}
Let $y \in \hat{X}$ be an element such that $M_y = A_y$. Then $M_y = M_{\fp_y}$.
\end{proposition}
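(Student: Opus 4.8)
The plan is to show that the canonical comparison homomorphism $\beta \colon M_{\fp_y} \to M_y$ coming from the universal property of localization is an isomorphism. First I would pin down the multiplicative set $S = M - \fp_y$. Since $M_y = A_y$ forces $y \geq [1]$ and hence $M \subseteq M_y$ (as in the proof of Theorem \ref{thm:subtoposes-of-monoid-type}), unwinding the definition $\fp_y = (M_y - M_y^\times) \cap M$ gives $S = M \cap M_y^\times$: every $m \in M$ lies in $M_y$, so $m \notin \fp_y$ precisely when $m \in M_y^\times$. In particular the inclusion $M \hookrightarrow M_y$ inverts every element of $S$, so by the universal property of $M_{\fp_y} = S^{-1}M$ it factors uniquely as $M \to M_{\fp_y} \xrightarrow{\beta} M_y$; the composite of $\beta$ with $M_y \hookrightarrow G$ is the canonical map $\gamma \colon M_{\fp_y} \to G$ induced by $M \hookrightarrow G$.

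The engine of the argument is that $M_y = A_y$ is flat as a left $M$-set, being the $M$-set attached to a point of $\psh(M)$. I would apply criterion (F2) to a pair $(g,1)$ with $g \in M_y$: this produces $c \in M_y$ and $m, n \in M$ with $g = mc$ and $1 = nc$. Working inside the group $G$, the relation $nc = 1$ forces $c = n^{-1}$ and $n \in M \cap M_y^\times = S$, so $g = m n^{-1} \in M S^{-1}$. Two specializations now do all the work. Taking $g$ arbitrary shows $M_y = M S^{-1} = \{\, m s^{-1} : m \in M,\ s \in S \,\}$, which is exactly the image of $\gamma$; hence $\beta$ is surjective. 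Taking $g = s^{-1}m$ for $m \in M$, $s \in S$ yields $s^{-1}m = m_1 m_2^{-1}$ with $m_1 \in M$, $m_2 \in S$, i.e.\ $m m_2 = s m_1$, which is precisely the statement that $M$ is right Ore with respect to $S$ (Definition \ref{def:right-ore}).

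It remains to prove $\beta$ injective, for which it suffices to show $\gamma \colon M_{\fp_y} \to G$ is injective. Because $M$ is right Ore with respect to $S$, every element of $M_{\fp_y}$ is a right fraction $m s^{-1}$ (as in the proof of Proposition \ref{prop:flat-iff-right-ore}), and two such fractions are equal in $M_{\fp_y}$ iff they admit a common right denominator in $S$ on which the numerators agree. Given $m_1 s_1^{-1} = m_2 s_2^{-1}$ in $G$, I would first use the Ore condition to find a common right multiple $s_1 u = s_2 v = s \in S$ (here $S$ being the complement of the prime ideal $\fp_y$, hence multiplicatively closed, guarantees the product stays in $S$), rewrite both fractions over the denominator $s$, and then cancel $s^{-1}$ in $G$ to get $m_1 u = m_2 v$ in $M$; this is exactly the relation witnessing equality of the two fractions in $M_{\fp_y}$. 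Thus $\gamma$ is injective, so $\beta$ is an isomorphism and $M_y = M_{\fp_y}$.

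I expect the main obstacle to be this injectivity step: everything else is a direct unwinding of definitions together with a single application of flatness, but matching equality in the group $G$ with equality in the abstractly presented localization $S^{-1}M$ requires the common-denominator manipulation and the observation that common right multiples of elements of $S$ remain in $S$. This is the standard Ore-localization bookkeeping already invoked around Proposition \ref{prop:flat-iff-right-ore}, so I would either carry it out explicitly or cite it.
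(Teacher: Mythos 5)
Your argument is correct, and its engine is identical to the paper's: the inclusion $M_y \subseteq M_{\fp_y}$ (your surjectivity of $\beta$) is obtained in the paper by exactly your move, namely applying flatness criterion (F2) to the pair $(g,1)$ in $A_y=M_y$ to get $g=mc$, $1=nc$ with $m,n\in M$, whence $c=n^{-1}$, $n\in S$ and $g=mn^{-1}$. The difference lies in how the reverse inclusion is handled. The paper invokes the universal property to factor $M\hookrightarrow M_y$ through $M_{\fp_y}$ and then asserts that ``up to isomorphism'' this factorization consists of inclusions $M\subseteq M_{\fp_y}\subseteq M_y$, i.e.\ it takes the injectivity of the comparison map for granted. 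You instead prove that injectivity: you first extract the right Ore condition for $S$ from flatness (the same computation as in the first half of the proof of Proposition \ref{prop:flat-iff-right-ore}), use it to write every element of $M_{\fp_y}$ as a right fraction $ms^{-1}$, and then check, via a common right denominator in $S$, that two such fractions equal in $G$ are already equal in $M_{\fp_y}$ --- note that only the easy direction of the fraction-equality criterion is needed here, so this step is sound. This is genuinely additional content rather than redundant bookkeeping: without it one only learns that $M_y$ is a quotient of $M_{\fp_y}$, and injectivity of $S^{-1}M\to G$ is not automatic for an arbitrary multiplicative subset of a submonoid of a group. So your route is somewhat longer but makes explicit a point the paper leaves implicit, while agreeing with it on the substantive use of flatness.
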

\begin{proof}
By construction, every element of $S = M-\fp_{y}$ becomes invertible in $M_y$. By the universal property of localization, this means that the inclusion $M \subseteq M_y$ factorizes through $M_{\fp_y}$. Up to isomorphism, we can assume that the factorization takes place via inclusions $M \subseteq M_{\fp_y} \subseteq M_y$. 

It remains to show the converse inclusion $M_y \subseteq M_{\fp_y}$. Take $g \in M_y$. Because $M_y=A_y$, we know that $M_y$ is flat as left $M$-set. So we can take $a \in M_y$ and elements $m,n \in M$ such that $ma = g$ and $na = 1$. We then find $g = mn^{-1}$ with $n \in S$, so $g \in M_{\fp_y}$.
\end{proof}

By combining Theorem \ref{thm:prime-right-ore-then-subtopos}, Theorem \ref{thm:subtoposes-of-monoid-type} and Proposition \ref{prop:My-is-Ay-then-prime}, we can conclude:

\begin{theorem} \label{thm:complete-classification}
Let $M$ be a submonoid of a group. Then the subtoposes of monoid type of $\psh(M)$ are precisely the subtoposes $\psh(M_\fp) \subseteq \psh(M)$, with inverse image functor $-\otimes_M M_\fp$, for $\fp \subseteq M$ a prime ideal such that $M$ is right Ore with respect to $M-\fp$.
\end{theorem}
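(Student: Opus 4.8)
The plan is to assemble the stated equivalence out of the two directions, invoking one of the three cited results for each.

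For the sufficiency direction --- that every $\psh(M_\fp) \subseteq \psh(M)$ with $\fp$ prime and $M$ right Ore with respect to $M - \fp$ really is a subtopos of monoid type --- I would simply appeal to Theorem \ref{thm:prime-right-ore-then-subtopos}. Under exactly these hypotheses it produces a geometric embedding $\psh(M_\fp) \subseteq \psh(M)$ with inverse image functor $-\otimes_M M_\fp$, and since $M_\fp$ is a monoid, such a subtopos is of monoid type by definition. No further work is needed here.

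For the converse --- that every subtopos of monoid type arises in this way --- I would chain Theorem \ref{thm:subtoposes-of-monoid-type} with Proposition \ref{prop:My-is-Ay-then-prime}. Given an arbitrary subtopos of monoid type, Theorem \ref{thm:subtoposes-of-monoid-type} furnishes a (unique) $y \in \hat{Y}$ with $M_y = A_y$ and presents the embedding as $i : \psh(M_y) \to \psh(M)$ with $i^\ast(X) \simeq X \otimes_M M_y$. Proposition \ref{prop:My-is-Ay-then-prime} then identifies $M_y$ with the localization $M_{\fp_y}$ at the prime ideal $\fp_y = (M_y - M_y^\times) \cap M$. Because this identification is realized by the inclusions $M \subseteq M_{\fp_y} \subseteq M_y$ inside $G$, it is compatible with both the monoid structure and the left $M$-set structure, so the two inverse image functors $-\otimes_M M_y$ and $-\otimes_M M_{\fp_y}$ coincide. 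Hence the given subtopos is of the claimed form $\psh(M_{\fp_y}) \subseteq \psh(M)$. Finally, the right Ore condition for $\fp_y$ comes for free: since $M_{\fp_y} = M_y = A_y$ is flat as a left $M$-set (this is built into Theorem \ref{thm:subtoposes-of-monoid-type}), Proposition \ref{prop:flat-iff-right-ore} gives that $M$ is right Ore with respect to $M - \fp_y$; alternatively one may feed the subtopos just produced into the ``only if'' half of Theorem \ref{thm:prime-right-ore-then-subtopos}.

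Since all three ingredients are already established, there is no deep obstacle remaining. The one point I would watch carefully is the \emph{bookkeeping across the identifications} $M_y = A_y = M_{\fp_y}$: I must check that every isomorphism in play respects both the monoid multiplication and the left $M$-action, so that the phrase ``$\psh(M_\fp)$ with inverse image $-\otimes_M M_\fp$'' in the statement literally names the same geometric embedding as the one delivered by Theorem \ref{thm:subtoposes-of-monoid-type}, rather than merely an equivalent topos. Once this compatibility is verified, the two directions combine to give precisely the asserted description.
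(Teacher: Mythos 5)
Your proposal is correct and follows essentially the same route as the paper, which proves Theorem \ref{thm:complete-classification} precisely by combining Theorem \ref{thm:prime-right-ore-then-subtopos}, Theorem \ref{thm:subtoposes-of-monoid-type} and Proposition \ref{prop:My-is-Ay-then-prime}; your additional care about the identifications $M_y = A_y = M_{\fp_y}$ and the deduction of the right Ore condition from flatness via Proposition \ref{prop:flat-iff-right-ore} just makes explicit what the paper leaves implicit.
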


\section{Returning to our examples}
\label{sec:return-to-examples}

\subsection{Free monoids} 

We already saw in Example \ref{eg:subtoposes-free-monoid} that $\psh(M)$ does not have any nontrivial subtoposes of monoid type, for $M = \langle{a_1,\dots,a_k}\rangle$ the free monoid on $k\geq 2$ variables. 
Using Theorem \ref{thm:complete-classification}, we can view the same phenomenon from a different angle. 

Let $S \subseteq M$ be any multiplicative subset. If $s \in S$ is an element with $s \neq 1$, then we can find an $m \in M$ such that $sM \cap mM = \varnothing$. For example, if $s \in a_1M$, then we can take $m \in a_2M$. It follows that $M$ is right Ore with respect to $S$ if and only if $S = \{1\}$. By Theorem \ref{thm:complete-classification}, the only subtopos of monoid type is then the one corresponding to the prime ideal $\fp = M -\{1\}$, i.e.\ the full subtopos $\psh(M) \subseteq \psh(M)$.

In order to reach this conclusion, it is not necessary to calculate the prime ideals of $M$. However, it might be helpful to take a look at the prime ideals anyway. For $M$ the free monoid on $k$ generators, we deduce from Proposition \ref{prop:prime-pirashvili} that there are $2^k$ prime ideals. Indeed, to construct a monoid homomorphism $M \to \{0,1\}$ we can freely choose either $0$ or $1$ as image for every generator of $M$. The prime ideals of $M$ can then be written as
\begin{equation*}
\fp_J ~=~ \bigcup_{j \in J} Ma_j M
\end{equation*}
for $J \subseteq \{1,\dots,k\}$ any subset.

\subsection{The Arithmetic Site}

We can use Proposition \ref{prop:prime-pirashvili} to compute the prime ideals of $M = \mathbb{N}^{\times}_+$. Because $M$ is a free commutative monoid on countably many generators, we see that the prime ideals are exactly the ideals of the form
\begin{equation} \label{eq:prime-ideal-arithm-site}
\fp = \bigcup_{p \in \Sigma} p\mathbb{N}^{\times}_+
\end{equation}
for $\Sigma$ any set of prime numbers. Because $M$ is commutative, it has the right Ore property with respect to any subset. By Theorem \ref{thm:complete-classification}, we then find that the subtoposes of monoid type for $\psh(\mathbb{N}^{\times}_+)$ are given by
\begin{equation*}
\psh(\mathbb{N}^{\times}_{+,\fp}) \subseteq \psh(\mathbb{N}^{\times}_+)
\end{equation*}
for $\fp$ as in \eqref{eq:prime-ideal-arithm-site}.

For $M = \ZZ^\ns$ the nonzero integers under multiplication, the situation is completely analogous. This time, the prime ideals are of the form
\begin{equation} \label{eq:prime-ideals-Zns}
\fp = \bigcup_{p \in \Sigma} p\ZZ^\ns
\end{equation}
for $\Sigma$ any set of primes. For each prime ideal, there is a subtopos of monoid type $\psh(\ZZ^\ns_\fp) \subseteq \psh(\ZZ^\ns)$, and these are the only subtoposes of monoid type.

\subsection{Matrices with integer coefficients} Consider now the monoid $M = \M^\ns(\ZZ)$. For $\det : M \to \ZZ^\ns$ the determinant map, we find a prime ideal $\det^{-1}(\fp) \subseteq M$ for every prime ideal $\fp$ of $\ZZ^\ns$. For a prime ideal $\fp \subseteq \ZZ^\ns$ as in \eqref{eq:prime-ideals-Zns}, the corresponding prime ideal of $M$ is
\begin{equation*}
\fq = \det^{-1}(\fp) = \{ a \in M : \exists p \in \Sigma,~ p\, | \, \det(a) \}.
\end{equation*}
Let $S$ be the complement of $\fq = \det^{-1}(\fp)$. In other words $S$ consists of the matrices whose determinant is not divisible by any prime in $\Sigma$.

For every $a \in M$, we have $aa^* = \det(a)$, where $a^*$ is the adjugate matrix of $a$, and $\det(a)$ is interpreted as a scalar matrix. From this observation, it follows that adding an inverse to each element of $S$  amounts to the same as adding an inverse to each $n \in \ZZ^\ns - \fp$, with $n$ interpreted as scalar matrix. In other words, we find that $M_{\fq} = M \otimes_{\ZZ^\ns} \ZZ^\ns_{\fp}$, where the tensor product is taken with respect to the inclusion $\ZZ^\ns \subseteq M$, sending an integer to the associated scalar matrix. Because $\ZZ^\ns_{\fp}$ is flat as left $\ZZ^\ns$-set, we know that $M_{\fq}$ will also be flat as left $M$-set. Indeed, $- \otimes_M M_\fq$ is the composition of $- \otimes_M M : \psh(M) \to \psh(\ZZ^\ns)$ followed by $- \otimes_{\ZZ^\ns} \ZZ^\ns_\fp : \psh(\ZZ^\ns) \to \sets$, and a composition of two functors that preserve finite limits will again preserve finite limits. So if a prime ideal of $M$ is of the form $\fq = \det^{-1}(\fp)$, then we get a subtopos $\psh(M_\fq) \subseteq \psh(M)$.

We now claim that every prime ideal of $M$ is of the form $\fq = \det^{-1}(\fp)$. We can again use Proposition \ref{prop:prime-pirashvili}. Every matrix $a \in M$ can be written in its Smith normal form $a = udv$ with $u,v \in M^\times$ and $d$ a diagonal matrix, with each diagonal entry dividing the next one. It follows that $M$ is generated by its units and by the matrices of the form
\begin{equation*}
a_p = \begin{pmatrix}
1 & 0 \\
0 & p
\end{pmatrix}
\end{equation*}
for every prime number $p$. By proposition \ref{prop:prime-pirashvili}, this means that for every set of prime numbers $\Sigma$, there is at most one prime ideal $\fq$ of $M$ such that
\begin{equation*}
a_p \in \fq ~\Leftrightarrow~ p \in \Sigma,
\end{equation*}
namely
\begin{equation}
\fq = \bigcup_{p \in \Sigma} M a_p M.
\end{equation}
This prime ideal is precisely $\fq = \det^{-1}(\fp)$ for $\fp = \bigcup_{p \in \Sigma} p\ZZ^\ns$.

We conclude that $\fp \mapsto \det^{-1}(\fp)$ gives a bijective correspondence between the prime ideals of $\ZZ^\ns$ and those of $M$. This in turn gives us a bijective correspondence between the subtoposes of monoid type for $\psh(\ZZ^\ns)$ and the subtoposes of monoid type for $\psh(M)$, sending $\psh(\ZZ^\ns_\fp) \subseteq \psh(\ZZ)$ to $\psh(M_\fq) \subseteq \psh(M)$, with $\fq = \det^{-1}(\fp)$.

\subsection{Torus knot monoids} Let us return to the torus knot monoid $$M = \langle{a,b : a^k = b^l}\rangle,$$ with $k,l \geq 1$ natural numbers. First, let us determine the prime ideals of $M$. Using Proposition \ref{prop:prime-pirashvili}, the prime ideals correspond to the monoid homomorphisms $\phi : M \to \{0,1\}$. If $\phi(a) = 0$, then $\phi(b)^l = \phi(b^l) = \phi(a^k) = \phi(a)^k = 0$, but then $\phi(b)=0$ as well. Conversely, if $\phi(b) = 0$, then $\phi(a)=0$ as well. So we can either send both $a$ and $b$ to $0$, or both $a$ and $b$ to $1$. The corresponding prime ideals of $M$ are $M - \{1\}$ and $\varnothing$. 

Trivially, $M$ is right Ore with respect to $\{1\}$, and we can also check that $M$ is right Ore with respect to itself. Using Theorem \ref{thm:complete-classification} we conclude that there are precisely two subtoposes of monoid type: localizing at $M-\{1\}$ gives the full subtopos $\psh(M)\subseteq\psh(M)$, while localizing at $\varnothing$ gives $\psh(G) \subseteq \psh(M)$ for $G$ the groupification of $M$ (i.e.\ the corresponding torus knot group).

\section*{Acknowledgements}

The author is a postdoctoral fellow of the Research Foundation Flanders (file number 1276521N).

\newcommand{\etalchar}[1]{$^{#1}$}
\providecommand{\bysame}{\leavevmode\hbox to3em{\hrulefill}\thinspace}
\providecommand{\MR}{\relax\ifhmode\unskip\space\fi MR }
\providecommand{\MRhref}[2]{%
  \href{http://www.ams.org/mathscinet-getitem?mr=#1}{#2}
}
\providecommand{\href}[2]{#2}

\end{document}